 \numberwithin{equation}{section}
\theoremstyle{plain}
\newtheorem{thm}{Theorem}[section]
\newtheorem{cor}[thm]{Corollary}
\newtheorem{lem}[thm]{Lemma}
\newtheorem{prop}[thm]{Proposition}
\theoremstyle{definition}
\newtheorem{defn}[thm]{Definition}
\theoremstyle{remark}
\newtheorem{rem}[thm]{Remark}
\newcommand{\N}{\mathbb{N}}
\newcommand{\R}{\mathbb{R}}
\newcommand{\I}{\infty}
\newcommand{\bp}{\begin{proof}[\ensuremath{\mathbf{Proof}}]}
\newcommand{\ep}{\end{proof}}
\newcommand{\e}{\varepsilon}
\newcommand{\lap}{\Delta}
\begin{document}


\title{A doubly nonlinear evolution for the optimal \\ Poincar\'{e} inequality}

\author{Ryan Hynd\footnote{Department of Mathematics, University of Pennsylvania.  Partially supported by NSF grant DMS-1301628.}\; and Erik Lindgren\footnote{Department of Mathematics, KTH. Supported by the Swedish Research Council, grant no. 2012-3124. Partially supported by the Royal Swedish Academy of Sciences.}}  

\maketitle

\begin{abstract}
We study the large time behavior of solutions of the PDE $|v_t|^{p-2}v_t=\Delta_p v$. A special property of this equation is that the Rayleigh quotient $\int_{\Omega}|Dv(x,t)|^pdx /\int_{\Omega}|v(x,t)|^pdx$ is nonincreasing in time along solutions.  As $t$ tends to infinity, this ratio converges to the optimal constant in Poincar\'{e}'s inequality. 
Moreover, appropriately scaled solutions converge to a function for which equality holds in this inequality.  An interesting limiting equation also arises when $p$ tends to infinity, which provides a new approach to approximating ground states of the infinity Laplacian. 
\end{abstract}

\section{Introduction}
In this paper, we study solutions $v:\Omega\times(0,\infty)\rightarrow\R$ of the PDE
\begin{equation}\label{MainPDE}
|v_t|^{p-2}v_t=\Delta_p v
\end{equation}
where $\Omega\subset \R^d$ is a bounded domain, $p\in (1,\infty)$, and $\Delta_p$ is the $p$-Laplacian
$$
\Delta_p \psi:=\text{div}(|D\psi|^{p-2}D\psi).
$$
The $p$-Laplacian arises in connection with various physical applications. Examples include non-Newtonian fluids, nonlinear
elasticity, glacial sliding and capillary surfaces as detailed in  \cite{Appell}, \cite{AJ92}, \cite{Krist} and \cite{Pel75}.

\par Observe that when $p=2$, the PDE \eqref{MainPDE} is the heat equation. As a result, we view \eqref{MainPDE} as a nonlinear flow.  What separates 
equation \eqref{MainPDE} from typical nonlinear parabolic equations, is the nonlinearity in the time derivative $|v_t|^{p-2}v_t$.  This type of equation is known in the literature 
as a {\it doubly nonlinear evolution}.  Furthermore, we regard \eqref{MainPDE} as special within the class of doubly nonlinear evolutions as it is homogeneous: if $v$ is a solution of  \eqref{MainPDE},  any multiple of $v$ is also a solution.

\par Our motivation for studying equation \eqref{MainPDE} is its connection with the optimal Poincar\'{e} inequality 
\begin{equation}\label{PoincareIneq}
\lambda_p\int_{\Omega}|\psi|^pdx\le \int_{\Omega}|D\psi|^pdx, \quad\psi\in W^{1,p}_0(\Omega).
\end{equation}
Here
$$
\lambda_p:=\inf\left\{\frac{\int_{\Omega}|D\psi|^pdx }{\int_{\Omega}|\psi|^pdx }: \psi\in W^{1,p}_0(\Omega)\setminus\{0\}\right\}
$$
is the least $p$-Rayleigh quotient, and \eqref{PoincareIneq} is ``optimal" as $\lambda_p$ is the largest constant for which this inequality is valid.  A function $\psi\in W^{1,p}_0(\Omega)\setminus\{0\}$ for which equality holds in \eqref{PoincareIneq} is called a {\it ground 
state} of $p$-Laplacian or simply a {\it $p$-ground state}. These functions are easily seen to exist and to satisfy the PDE
\begin{equation}\label{GroundStatePDE}
-\Delta_p \psi=\lambda_p |\psi|^{p-2}\psi
\end{equation}
in $\Omega$.  Moreover, $\lambda_p$ is ``simple'' in the sense that any two $p$-ground states are multiples of each other \cite{LindKaw, Lin,  Saka}. 

\par In what follows, we prove that a properly scaled solution of the initial value problem
\begin{equation}\label{pParabolic}
\begin{cases}
|v_t|^{p-2}v_t=\Delta_p v, \quad \Omega\times (0,\infty) \\
\hspace{.48in} v=0, \hspace{.31in} \partial \Omega\times [0,\infty)\\
\hspace{.48in} v=g, \hspace{.32in} \Omega\times\{0\}
\end{cases}
\end{equation}
converges to a $p$-ground state as $t\rightarrow\infty$. First, we show that \eqref{pParabolic} has a {\it weak solution} in the sense of a doubly nonlinear evolution, and then derive various 
global estimates on weak solutions.  In particular, we verify that the $p$-Rayleigh quotient is nonincreasing for each weak solution of 
\eqref{pParabolic}
$$
\frac{d}{dt}\left\{\frac{\int_{\Omega}|Dv(x,t)|^pdx }{\int_{\Omega}|v(x,t)|^pdx }\right\}\le 0.
$$
This monotonicity formula and the homogeneity of equation \eqref{MainPDE} are crucial ingredients in establishing the following result.

\begin{thm}\label{LargeTthm} Assume $g\in W^{1,p}_0(\Omega)$ and define   
\begin{equation}\label{mup}
\mu_p:=\lambda_p^{\frac{1}{p-1}}.\nonumber
\end{equation}
Then for any weak solution $v$ of \eqref{pParabolic}, the limit 
$$
\psi:=\lim_{t\rightarrow \infty}e^{\mu_p t}v(\cdot,t)
$$ exists in $W^{1,p}_0(\Omega)$ and is a $p$-ground state, provided $\psi\not\equiv 0$. In this case, $v(\cdot, t)\not\equiv 0$ for $t\ge 0$ and 
$$
\lambda_p=\lim_{t\rightarrow \infty}\frac{\int_\Omega|Dv(x,t)|^pdx}{\int_\Omega|v(x,t)|^pdx}. 
$$
\end{thm}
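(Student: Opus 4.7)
The plan is to study the rescaled function $w(\cdot,t):=e^{\mu_p t}v(\cdot,t)$, chosen so that time-independent profiles of the equation governing $w$ are precisely the $p$-ground states; note that $w$ and $v$ share the same Rayleigh quotient by homogeneity. Writing $E(t):=\int_\Omega|Dv|^p\,dx$, $F(t):=\int_\Omega|v|^p\,dx$, and $R(t):=E/F$, I would first establish the two energy identities obtained by testing the PDE against $v_t$ and against $v$:
$$
\int_\Omega|v_t|^p\,dx=-\tfrac{1}{p}E'(t),\qquad \int_\Omega v\,|v_t|^{p-2}v_t\,dx=-E(t).
$$
Applying H\"older's inequality to the second and invoking the Poincar\'e bound $R\ge\lambda_p$ yields
$$
-\frac{E'(t)}{E(t)}\ \ge\ pR(t)^{1/(p-1)}\ \ge\ p\mu_p,
$$
so $E(t)\le E(0)e^{-p\mu_p t}$ and hence $\int_\Omega|Dw(\cdot,t)|^p\,dx\le\int_\Omega|Dg|^p\,dx$ for all $t\ge0$. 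The same inequality, rewritten as $\frac{d}{dt}\bigl[e^{p\mu_p t}E(t)\bigr]\le 0$ and integrated in time, shows that $t\mapsto\int_\Omega|Dw|^p\,dx$ decreases to some $\phi_\infty\ge0$ and yields the crucial weighted bound
$$
\int_0^\infty\bigl(R(t)^{1/(p-1)}-\mu_p\bigr)\int_\Omega|Dw(\cdot,t)|^p\,dx\,dt\ <\ \infty.
$$

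With this uniform $W^{1,p}_0$ bound, any sequence $t_n\to\infty$ admits a subsequence along which $w(\cdot,t_n)\rightharpoonup\psi$ in $W^{1,p}_0(\Omega)$ and strongly in $L^p(\Omega)$. Since $R$ is nonincreasing, $R(t)\to R_\infty\ge\lambda_p$; if $R_\infty>\lambda_p$, the weighted bound combined with $\int_\Omega|Dw|^p\,dx\ge\phi_\infty$ forces $\phi_\infty=0$, so $w(\cdot,t)\to 0$ strongly in $W^{1,p}_0$ and $\psi\equiv0$. In the nondegenerate case $R_\infty=\lambda_p$ with $\psi\not\equiv0$, strong $L^p$ convergence and the identity $\int|Dw|^p=R(v)\int|w|^p$ give $\int_\Omega|Dw(\cdot,t_n)|^p\,dx\to\lambda_p\int_\Omega|\psi|^p\,dx$. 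Weak lower semicontinuity of the $W^{1,p}_0$-seminorm together with Poincar\'e's inequality then forces $\int|D\psi|^p=\lambda_p\int|\psi|^p$, so $\psi$ is a $p$-ground state; strong $W^{1,p}_0$ convergence of the subsequence follows from norm convergence plus weak convergence via the uniform convexity of $L^p$.

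To upgrade subsequential to full convergence, I would invoke simplicity of $\lambda_p$: every nonzero subsequential limit has the form $\pm c\phi_0$ for a fixed normalized ground state $\phi_0$, with $|c|$ pinned down by the common value $\phi_\infty$. The $\omega$-limit set of $w$ in $L^p(\Omega)$ is therefore contained in the two-point set $\{+c\phi_0,-c\phi_0\}$; being the $\omega$-limit of a continuous, precompact trajectory in $L^p$, it must be connected and hence a singleton, so the full limit exists. Finally, $v(\cdot,t)\not\equiv0$ whenever $\psi\not\equiv0$ follows from uniqueness for the Cauchy problem applied to the zero solution: if $v(\cdot,t_0)\equiv0$ then $v\equiv0$ for $t\ge t_0$, forcing $\psi\equiv0$. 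The main obstacle is the identification step in the second paragraph---using the weighted integrability of $R(t)^{1/(p-1)}-\mu_p$ to extract the dichotomy between $\psi\equiv0$ and $R(t)\to\lambda_p$---which is the conceptual heart of the argument.
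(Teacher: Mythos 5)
Your proposal is essentially correct and takes a genuinely different route to the final (and hardest) step, namely upgrading subsequential convergence to full convergence. The paper first extracts, via its compactness theorem for sequences of weak solutions (Theorem \ref{CompactnessLem}), subsequential limits of the translated, rescaled trajectories $v^k(x,t)=e^{\mu_p s_k}v(x,t+s_k)$ as functions of $(x,t)$; it shows each such limit is a separated-variables solution $e^{-\mu_p t}\psi$ with $\psi$ a ground state, and then rules out a sign flip (i.e.\ two subsequences converging to $\psi$ and to $-\psi$) by a delicate sign-persistence lemma (Lemma \ref{CrucialLem}) combined with an iteration in time and the simplicity of $\lambda_p$. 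You instead extract subsequential limits of the one-parameter family $w(\cdot,t)=e^{\mu_p t}v(\cdot,t)$ in $L^p$ using only the uniform $W^{1,p}_0$ bound and Rellich compactness, identify each limit as $\pm c\phi_0$ via the Rayleigh-quotient argument, and then observe that the $\omega$-limit set of the continuous precompact curve $t\mapsto w(\cdot,t)$ in $L^p(\Omega)$ is connected, hence cannot be the two-point set $\{c\phi_0,-c\phi_0\}$. This topological shortcut replaces the paper's Lemma \ref{CrucialLem} and its induction, and also spares you the need to invoke Theorem \ref{CompactnessLem} (you never need convergence of whole trajectories of weak solutions). Your intermediate device, the weighted integrability $\int_0^\infty(R(t)^{1/(p-1)}-\mu_p)\int_\Omega|Dw|^p\,dx\,dt<\infty$, which you use to force the dichotomy $R_\infty=\lambda_p$ versus $\phi_\infty=0$, does not appear in the paper but is a clean consequence of the same differential inequality, and it does carry the weight you want: if $R_\infty>\lambda_p$ it forces $\int|Dw|^p\to 0$, and otherwise $\int|w|^p\to S/\lambda_p>0$, which is exactly what you need to know that subsequential limits are nonzero. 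So the concern you flag at the end as ``the main obstacle'' is in fact resolved by your own weighted bound.

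One genuine, though minor, flaw: your justification for $v(\cdot,t)\not\equiv 0$ when $\psi\not\equiv 0$ appeals to uniqueness of the Cauchy problem, but the paper explicitly notes that uniqueness of weak solutions is not known for this doubly nonlinear evolution. The conclusion is nevertheless correct and is obtained without uniqueness: by Lemma \ref{AbsPhi} the map $t\mapsto\int_\Omega|Dv(\cdot,t)|^p\,dx$ is nonincreasing, so $v(\cdot,t_0)\equiv 0$ forces $\int_\Omega|Dv(\cdot,t)|^p\,dx=0$ for all $t\ge t_0$, hence $v(\cdot,t)\equiv 0$ for $t\ge t_0$ by Poincar\'e, and then $S=0$ and $\psi\equiv 0$. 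Replace the appeal to uniqueness with this monotonicity argument and the proof is complete.
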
 
When $p=2$, a direct proof of Theorem \ref{LargeTthm} can be made by writing the solution of the heat equation in terms of the basis of eigenfunctions for the Dirichlet Laplacian.  For $p\neq 2$, no such formulae are available and we must work directly with the equation.  It is interesting to compare Theorem \ref{LargeTthm} to other large time asymptotics results for fully nonlinear parabolic equations \cite{Arm, KimLee} and for 
nonlinear degenerate flows \cite{ABC, Aronson,Vazquez2, Vazquez1}.  Most of these works involve comparison principles and initial conditions which do not change sign.
Our main tool in this paper is a compactness property of weak solutions of \eqref{MainPDE} and applies to general initial data.  

\par We also verify that \eqref{pParabolic} has a unique {\it viscosity solution} when $p\ge 2$. We note it is unknown whether weak solutions are unique or if each weak solution is  a viscosity solution.   
Moreover, the uniqueness of solutions of general doubly nonlinear evolutions is not well understood. Nevertheless, we show there is always one weak solution of \eqref{pParabolic} that arises via the implicit time scheme: 
$v^0=g$, 
\begin{equation}
\begin{cases}\label{ViscScheme}
{\cal J}_p\left(\frac{v^k - v^{k-1}}{\tau}\right)=\Delta_pv^k,& x\in \Omega \\
\hspace{.76in} v^k=0, & x\in\partial \Omega
\end{cases}
\end{equation}
for $k\in\N$ and  $\tau>0$.  Here  ${\cal J}_p$ is the increasing function
\begin{equation}
{\cal J}_p(w):=|w|^{p-2}w, \quad w\in\R. \nonumber
\end{equation}
Standard variational methods can be used to show this scheme has a unique weak solution sequence $\{v^k_\tau\}_{k\in \N}\subset W^{1,p}_0(\Omega)$ for each $\tau>0$.  We argue that each $v^k_\tau$ is also a continuous viscosity solution and then use viscosity solutions methods to verify the following convergence result.

\begin{thm}\label{ImplicitSchemeThm}
Assume that $p\ge 2$ and that $\partial\Omega$ is smooth.  Additionally suppose that $g\in W^{1,p}_0(\Omega)\cap C(\overline{\Omega})$ and that there is a $p$-ground state $\varphi$ such that
$$
-\varphi(x)\le g(x)\le \varphi(x), \quad x\in \overline{\Omega}.
$$
Denote the solution sequence of the implicit scheme \eqref{ViscScheme} as
$\{v^k_\tau\}_{k\in \N}$ and set 
\begin{equation}\label{CLaan}
v_N(\cdot, t):=
\begin{cases}
g, \hspace{.4in} t=0\\
v^k_{T/N}, \quad (k-1)T/N< t \le kT/N, \quad k=1, \dots, N\\
\end{cases}
\end{equation}
for $N\in\N$ and $T>0$.  Then $v(\cdot,t):=\lim_{N\rightarrow \infty}v_N(\cdot,t)$ exists in  $L^p(\Omega)\cap C(\overline{\Omega})$ uniformly in $t\in [0,T]$. Moreover, $v$ is the unique viscosity solution and a weak solution of the initial value problem \eqref{pParabolic}.
\end{thm}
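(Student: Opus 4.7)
The plan is to construct $v$ as a uniform limit of the interpolants $v_N$, identify it as a viscosity solution of \eqref{MainPDE}, and separately show it is also a weak solution; a comparison principle will then yield uniqueness of the viscosity solution and upgrade subsequential convergence to convergence of the entire sequence. I would organize the argument in four stages.

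\emph{Analysis of the scheme.} Equation \eqref{ViscScheme} is the Euler--Lagrange equation of the strictly convex functional
$$w\longmapsto \frac{1}{p}\int_\Omega |Dw|^p\,dx+\frac{1}{p\tau^{p-1}}\int_\Omega |w-v^{k-1}_\tau|^p\,dx$$
on $W^{1,p}_0(\Omega)$, so each $v^k_\tau$ exists and is unique. Comparing the minimum value to the value at $v^{k-1}_\tau$ gives the discrete energy decay
$$\frac{1}{p}\int_\Omega |Dv^k_\tau|^p\,dx+\frac{\tau}{p}\int_\Omega \left|\frac{v^k_\tau-v^{k-1}_\tau}{\tau}\right|^p\,dx \leq \frac{1}{p}\int_\Omega |Dv^{k-1}_\tau|^p\,dx.$$
Standard $L^\infty$- and H\"older-regularity results for the inhomogeneous $p$-Laplace equation imply $v^k_\tau\in C(\overline\Omega)$, and the Juutinen--Lindqvist--Manfredi-style equivalence of weak and viscosity solutions of $p$-Laplace-type equations shows that each $v^k_\tau$ is a viscosity solution of \eqref{ViscScheme}.

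\emph{Uniform bounds and compactness.} Whenever $v^{k-1}_\tau\leq\varphi$, one has $\mathcal{J}_p((\varphi-v^{k-1}_\tau)/\tau)\geq 0$ and $-\Delta_p\varphi=\lambda_p\mathcal{J}_p(\varphi)\geq 0$, so $\varphi$ is a supersolution of \eqref{ViscScheme}; testing with $(v^k_\tau-\varphi)^+$ gives $v^k_\tau\leq\varphi$, and a symmetric argument gives $v^k_\tau\geq -\varphi$. By induction, $\|v_N\|_{L^\infty(\Omega\times[0,T])}\leq \|\varphi\|_\infty$ uniformly in $N$. Combining this with the telescoped energy inequality bounds $v_N$ uniformly in $L^\infty((0,T);W^{1,p}_0(\Omega))$ and bounds its discrete time derivative in $L^p(\Omega\times(0,T))$. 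Together with uniform spatial H\"older regularity for $p$-Laplace equations and an equicontinuity-in-time bound drawn from the energy-dissipation identity, Arzel\`a--Ascoli produces a subsequence of $v_N$ converging uniformly on $\overline\Omega\times[0,T]$ to some $v\in C(\overline\Omega\times[0,T])$.

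\emph{Identification and uniqueness.} I would identify any such limit $v$ as a viscosity solution of \eqref{MainPDE} by a Barles--Souganidis-style scheme-convergence argument: given a smooth test function touching $v$ strictly from above, perturb it to touch $v_{N_j}$ at a nearby grid point $(x_j,t_j)$ and pass to the limit in the viscosity inequality for $v^{k_j}_{\tau_j}$, with the discrete quotient $(v^{k_j}-v^{k_j-1})/\tau_j$ converging to the test function's time derivative. A comparison principle for \eqref{MainPDE}, proved by the doubled-variable method with simultaneous spatial and temporal penalizations, then gives uniqueness of the viscosity solution and forces the whole sequence $v_N$ to converge. That $v$ is also a weak solution follows from the uniform $W^{1,p}$-bound and $L^p$-bound on discrete time derivatives, which allow passage to the limit in the weak formulation of \eqref{ViscScheme}. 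The principal obstacle is the comparison principle for \eqref{MainPDE}: the nonlinear time derivative $|v_t|^{p-2}v_t$ prevents a direct appeal to classical parabolic comparison, and in the doubled-variable argument the penalization $|t-s|^p/\e^p$ generates terms of the form $\mathcal{J}_p((\hat t-\hat s)/\e)$ on both sides that must be balanced simultaneously. The restriction $p\geq 2$ enters precisely here, to keep $\mathcal{J}_p$ monotone and locally $C^1$ so that these cross-terms cancel; the assumption $-\varphi\leq g\leq \varphi$ is then used as a parabolic-boundary barrier when applying the comparison principle to identify the limit uniquely.
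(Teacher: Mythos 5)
Your overall plan (build a viscosity solution from the implicit scheme, use a comparison principle for uniqueness, show separately that the limit is a weak solution, and use the ground-state $\varphi$ as a barrier) is in the same spirit as the paper, but there is a genuine gap in the compactness step that the paper deliberately avoids.

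You propose to extract a uniformly convergent subsequence of $v_N$ on $\overline\Omega\times[0,T]$ by Arzel\`a--Ascoli, citing ``uniform spatial H\"older regularity for $p$-Laplace equations and an equicontinuity-in-time bound drawn from the energy-dissipation identity.'' Neither ingredient is actually available. The spatial Hölder/$C^{1,\alpha}$ estimates for the inhomogeneous $p$-Laplace equation (e.g.\ DiBenedetto) require a bound on the right-hand side $\mathcal{J}_p\big((v^k_\tau-v^{k-1}_\tau)/\tau\big)$, which the $L^\infty$ bound $|v^k_\tau|\le\sup|g|$ only controls as $O(\tau^{-(p-1)})$, so the resulting modulus of continuity degenerates as $\tau\to 0$. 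Likewise, the telescoped energy identity yields $\sum_k\tau\,\|(v^k_\tau-v^{k-1}_\tau)/\tau\|_{L^p(\Omega)}^p\le C$, which gives Hölder-$(1-1/p)$ equicontinuity in time only in the $L^p(\Omega)$ norm, not in the sup norm. So Arzel\`a--Ascoli cannot be applied in $C(\overline\Omega\times[0,T])$ from these bounds. This is not a minor technicality: the paper explicitly remarks (after the proof of Theorem \ref{LargeTthm}) that a modulus-of-continuity estimate for solutions of \eqref{MainPDE} is precisely the missing a priori estimate they were unable to obtain.

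The paper sidesteps this by never extracting a uniformly convergent subsequence. Instead it works with the half-relaxed limits $\overline v$ and $\underline v$ of the full sequence $(v_N)$, which are always defined since $v_N$ is locally bounded. It then (a) proves $\overline v$ and $\underline v$ are respectively a viscosity sub- and supersolution, via a discrete-to-continuous passage of maximum points (Lemma \ref{ApproxPoints} and Corollary \ref{LemdiscreteVisc}, in the Barles--Perthame spirit); (b) uses the ground-state barrier (Lemma \ref{BoundaryBarrier}) and a cone-type barrier at $t=0$ (Lemma \ref{zeroBarrier}) to show $\overline v=\underline v$ on the parabolic boundary; and (c) invokes the comparison principle, Proposition \ref{UsualComparison}, to conclude $\overline v\le\underline v$, hence $\overline v=\underline v=v$, which \emph{yields} the locally uniform convergence of $v_N$ as a conclusion rather than a hypothesis. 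Identification of $v$ as a weak solution is handled separately (using the $C([0,T];L^p)$-compactness of Theorem \ref{CompactnessLem}) and then matched to the viscosity solution via uniqueness. If you replace your Arzel\`a--Ascoli step with the half-relaxed-limits framework you already gesture toward in the identification paragraph, your argument becomes essentially the paper's.
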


\par It was previously established that a subsequence of $(v_N)_{N\in \N}$ converges to a weak solution \cite{Arai, Colli}.  The novelty of Theorem \ref{ImplicitSchemeThm} is that the full limit exists and that the limit is additionally a viscosity solution. Employing viscosity solutions will also allow us to pass to the limit as the exponent $p\rightarrow \infty$ in equation \eqref{MainPDE}.  This idea was inspired by the work of P. Juutinen, P. Lindqvist and J. Manfredi, who first studied the so-called infinity eigenvalue problem and infinity ground states \cite{JLM}.  We view the following result as providing a natural evolution equation for the infinity eigenvalue problem and its ground states.

\begin{thm}\label{pinfProp}
Assume $g\in W^{1,\infty}_0(\Omega)$ and let $v^p$ denote a viscosity solution of \eqref{pParabolic} for $p\ge 2$ with initial 
condition $g$.  There is an increasing sequence $p_k\rightarrow \infty$ such that $(v^{p_k})_{k\in \N}$ converges locally uniformly to a viscosity solution of the PDE 
\begin{equation}\label{infParabolic}
\begin{cases}
G_\infty(v_t, Dv, D^2v)=0,  \hspace{.31in} \Omega\times (0,\infty)\\
\hspace{1.1in} v=0, \hspace{.31in} \partial \Omega\times [0,\infty)\\
\hspace{1.1in} v=g,\hspace{.32in} \Omega\times\{0\}
\end{cases}
\end{equation}
as $k\rightarrow \infty$. The operator above is defined as  
$$
G_\infty(\phi_t,D\phi, D^2\phi):=
\begin{cases}
\min\{-\Delta_\infty\phi, |D\phi| + \phi_t\}, \quad &\phi_t<0\\
-\Delta_\infty\phi, \quad & \phi_t=0\\
\max\{-\Delta_\infty\phi,- |D\phi| + \phi_t\}, \quad& \phi_t>0\\
\end{cases},
$$
where $\Delta_\infty\phi:=D^2\phi D\phi\cdot D\phi$ is the infinity Laplacian.  
\end{thm}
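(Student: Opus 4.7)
The strategy is the classical one developed by Juutinen, Lindqvist, and Manfredi for $p\to\infty$ limits \cite{JLM}: establish $p$-independent estimates on the family $\{v^p\}_{p\ge 2}$, extract an increasing sequence $p_k\to\infty$ along which $v^{p_k}$ converges locally uniformly, and then verify that the limit is a viscosity solution of \eqref{infParabolic} by testing the equation $|v^p_t|^{p-2}v^p_t=\Delta_p v^p$ against smooth functions and analyzing what survives after dividing by suitable powers of $|D\phi|$.

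For the uniform estimates, I would first bound $\|v^p\|_{L^\infty(\Omega\times[0,\infty))}$ by comparison against the explicit separated-variables solution $e^{-\mu_p t}\varphi_p(x)$, where $\varphi_p$ is a $p$-ground state scaled so that $-\varphi_p\le g\le\varphi_p$ (possible because $g\in W^{1,\infty}_0(\Omega)$ is Lipschitz and vanishes on $\partial\Omega$). The uniqueness statement of Theorem \ref{ImplicitSchemeThm}, together with its underlying viscosity comparison, then forces $|v^p(x,t)|\le e^{-\mu_p t}\varphi_p(x)$, and the scaling can be chosen uniformly in $p$. Uniform spatial equicontinuity follows from translation invariance and comparison in the interior combined with a boundary barrier coming from the Lipschitz bound on $g$, so that $v^p(\cdot,t)$ is Lipschitz with a $p$-independent constant. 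Temporal equicontinuity I would obtain by constructing explicit sub- and supersolution barriers of the form $g\pm\omega(t)$, where $\omega$ is built from a smooth regularization of $g$ and comparison is again applied.

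After extracting $p_k\to\infty$ with $v^{p_k}\to v$ locally uniformly, the boundary and initial conditions pass to the limit. To verify the equation, let $\phi$ touch $v$ strictly from above at an interior point $(x_0,t_0)$ with $D\phi(x_0,t_0)\ne 0$; uniform convergence yields touching points $(x_k,t_k)\to(x_0,t_0)$ for $v^{p_k}$ at which
\begin{equation*}
|\phi_t|^{p_k-2}\phi_t \;\le\; |D\phi|^{p_k-2}\Delta\phi \,+\, (p_k-2)|D\phi|^{p_k-4}\Delta_\infty\phi.
\end{equation*}
Dividing by $|D\phi|^{p_k-2}$ and sending $k\to\infty$ shows that $\phi_t\le|D\phi|$ at $(x_0,t_0)$ whenever $\phi_t>0$, since otherwise the left side grows exponentially in $p_k$ while the right side grows only linearly; dividing instead by $(p_k-2)|D\phi|^{p_k-4}$ produces $-\Delta_\infty\phi\le 0$ in the limit. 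Together these give the $\max$ condition defining $G_\infty$ when $\phi_t(x_0,t_0)>0$. The remaining subsolution cases $\phi_t\le 0$ are handled by the same dichotomy (the left side becomes negligible, so only $-\Delta_\infty\phi\le 0$ survives, which already makes the $\min$ nonpositive), and the supersolution case is symmetric. Degenerate touch points with $D\phi(x_0,t_0)=0$ are handled by the usual restriction of admissible test functions for the infinity Laplacian.

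The main obstacle I anticipate is securing the $p$-independent temporal equicontinuity, since the nonlinear time derivative $|v_t|^{p-2}v_t$ precludes the standard parabolic smoothing available for the $p$-parabolic flow. The viscosity comparison principle underlying Theorem \ref{ImplicitSchemeThm} provides the correct framework for building barriers, but verifying that the resulting modulus of continuity does not degenerate as $p\to\infty$, and simultaneously managing the loss of comparison at $D\phi=0$ in the limit operator $G_\infty$, is where the technical work concentrates.
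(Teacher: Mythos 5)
Your verification step — dividing the viscosity inequality $|\phi_t|^{p_k-2}\phi_t\le\Delta_{p_k}\phi$ at the touching points first by $|D\phi|^{p_k-2}$ to force $\phi_t\le|D\phi|$ when $\phi_t>0$, then by $(p_k-2)|D\phi|^{p_k-4}$ to extract $-\Delta_\infty\phi\le 0$, with the case analysis on the sign of $\phi_t$ and the odd symmetry of $G_\infty$ handling the supersolution side — is essentially identical to the paper's argument (the paper records the relevant rearranged inequality as a single display and draws both conclusions from it). The genuine divergence is in the compactness estimates.

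The paper gets equicontinuity much more cheaply than you propose. Rather than barriers, it uses the energy identity
\[
\int_0^t\int_\Omega|v^p_t|^p\,dx\,ds+\int_\Omega\tfrac{1}{p}|Dv^p(\cdot,t)|^p\,dx=\int_\Omega\tfrac{1}{p}|Dg|^p\,dx,
\]
whose right-hand side is $\le\frac{|\Omega|}{p}\|Dg\|_\infty^p$ because $g\in W^{1,\infty}_0(\Omega)$. Taking $p$-th roots and applying H\"older on bounded space-time cylinders shows that $v^p_t$ and $Dv^p$ are bounded in $L^r_{\mathrm{loc}}(\Omega\times(0,\infty))$ for every fixed $r$, uniformly for $p>r$. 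Morrey's embedding in the $n+1$ space-time variables then gives $v^p\in C^{1-(n+1)/p}_{\mathrm{loc}}$ with $p$-independent norm, and local uniform subconvergence follows. This sidesteps entirely what you correctly flag as the weak point of your plan: barriers of the form $g\pm\omega(t)$ cannot be made $p$-independent in any obvious way, since $g$ is only Lipschitz, and after mollification $\Delta_p g_\epsilon$ blows up as $\epsilon\to 0$, so the resulting temporal modulus would degenerate as $p\to\infty$. Likewise, your $L^\infty$ bound $|v^p|\le e^{-\mu_p t}\varphi_p$ via $p$-ground-state barriers, and the claim that $\varphi_p$ can be normalized uniformly in $p$, each require additional justification (control of $\|\varphi_p\|_\infty$ and $\|D\varphi_p\|_\infty$ as $p\to\infty$, which ultimately rests on facts from \cite{JLM}) that the energy argument never needs. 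So your scheme could probably be made to work, but the paper's energy-plus-Morrey route is both shorter and avoids the difficulty you yourself identified as the technical crux.

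One smaller remark: you defer the degenerate case $D\phi(x_0,t_0)=0$ to a ``usual restriction of test functions,'' but the paper handles it directly. When $\phi_t(x_0,t_0)>0$, the inequality $|\phi_t(x_k,t_k)|^{p_k-2}\phi_t(x_k,t_k)\le\Delta_{p_k}\phi(x_k,t_k)$ already forces $D\phi(x_k,t_k)\neq 0$ for large $k$, and boundedness of the right-hand side of the rearranged inequality then yields $\phi_t(x_0,t_0)\le|D\phi(x_0,t_0)|$ in the limit, which precludes $D\phi(x_0,t_0)=0$; when $\phi_t(x_0,t_0)=0$ and $D\phi(x_0,t_0)=0$, one has $-\Delta_\infty\phi(x_0,t_0)=0$ trivially. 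No restriction of the test-function class is needed.
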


\par This paper is organized as follows. In section \ref{WeakSol}, we discuss the existence theory for weak solutions.  In particular, we present a novel compactness result for the doubly nonlinear evolution \eqref{pParabolic}. We justify Theorem \ref{LargeTthm} in section \ref{LargeTlim} and then  discuss viscosity solutions and prove Theorem \ref{ImplicitSchemeThm} in section \ref{ViscSoln}.  Finally, we verify Theorem \ref{pinfProp} in section \ref{LargePlim}. 
We thank the Institut Mittag-Leffler for hosting us during the initial phase of this work. We especially thank Peter Lindqvist and Jerry Kazdan for their advice and encouragement.

\section{Weak Solutions}\label{WeakSol}
An important identity for smooth solutions of \eqref{pParabolic} is 
\begin{equation}\label{diffEnergyIdentity}
\frac{d}{dt}\int_{\Omega}\frac{1}{p}|Dv(x,t)|^pdx=-\int_{\Omega}|v_t(x,t)|^pdx.
\end{equation}
This identity follows from direct computation.  Of course, integrating \eqref{diffEnergyIdentity} in time yields
\begin{equation}\label{EnergyIdentity}
\int^t_0\int_{\Omega}|v_t(x,s)|^pdxds+\int_{\Omega}\frac{1}{p}|Dv(x,t)|^pdx=\int_{\Omega}\frac{1}{p}|Dg(x)|^pdx
\end{equation}
for $t\ge 0$.  This resulting equality leads us to seek solutions defined as follows. 
\begin{defn}
Assume $g\in W^{1,p}_0(\Omega)$.  We say that a function $v$ satisfying 
\begin{equation}\label{NaturalSpace}
v\in L^\I([0,\infty); W^{1,p}_0(\Omega)), \quad v_t\in L^p(\Omega\times [0,\infty))
\end{equation}
is a {\it weak solution} of \eqref{pParabolic} if for Lebesgue almost every $t> 0$
\begin{equation}\label{WeakForm}
\int_{\Omega}|v_t(x,t)|^{p-2}v_t(x, t)\phi(x)dx +\int_{\Omega}|Dv(x,t)|^{p-2}Dv(x, t)\cdot D\phi(x)dx =0 \\
\end{equation}
for each $\phi\in W^{1,p}_0(\Omega)$ and 
\begin{equation}\label{InitialCond}
v(x,0)=g(x). 
\end{equation}
\end{defn}
Any $v$ satisfying \eqref{NaturalSpace} takes values in $L^p(\Omega)$ that are continuous in time, that is
$$
v\in C([0,T]; L^p(\Omega))\quad \text{for any $T>0$.}
$$
Therefore, we may consider the pointwise values $v(\cdot,t)\in L^p(\Omega)$ of a weak solution and assign the initial condition \eqref{InitialCond}. Let us now 
derive a few properties of solutions.

\begin{lem}\label{AbsPhi}
Assume $v$ is a weak solution of \eqref{pParabolic}. Then $[0,\infty)\ni t\mapsto \int_\Omega |Dv(x,t)|^pdx$ is absolutely continuous and \eqref{diffEnergyIdentity} holds for almost every $t>0$. 
\end{lem}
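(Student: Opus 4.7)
The plan is to reproduce the formal identity \eqref{diffEnergyIdentity} --- which would come from testing \eqref{WeakForm} against $\phi=v_t$ --- under only the weak regularity \eqref{NaturalSpace}. The immediate obstruction is that $v_t(\cdot,t)\in L^p(\Omega)$ is not admissible in \eqref{WeakForm}, which demands $\phi\in W^{1,p}_0(\Omega)$. I would sidestep this by replacing $v_t$ with a time difference quotient of $v$ and then using the convexity of $\xi\mapsto\frac{1}{p}|\xi|^p$ to convert the gradient term into a discrete time derivative of $E(t):=\int_\Omega|Dv(x,t)|^p\,dx$.

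Fix $\epsilon>0$ and, for a.e.\ $\sigma>0$, set $w_\epsilon(\cdot,\sigma):=[v(\cdot,\sigma+\epsilon)-v(\cdot,\sigma)]/\epsilon$. Since $v\in L^\infty([0,\infty);W^{1,p}_0(\Omega))$, this is a legitimate test function in \eqref{WeakForm}. Applying the convexity bound $\tfrac{1}{p}|a|^p-\tfrac{1}{p}|b|^p\ge|b|^{p-2}b\cdot(a-b)$ pointwise with $(a,b)=(Dv(\sigma+\epsilon),Dv(\sigma))$, then dividing by $\epsilon$ and integrating over $\Omega$, gives
\begin{equation*}
\int_\Omega|Dv(\cdot,\sigma)|^{p-2}Dv(\cdot,\sigma)\cdot Dw_\epsilon(\cdot,\sigma)\,dx\;\le\;\frac{1}{p}\cdot\frac{E(\sigma+\epsilon)-E(\sigma)}{\epsilon}.
\end{equation*}
Testing \eqref{WeakForm} against $w_\epsilon(\cdot,\sigma)$, substituting the above bound, and integrating $\sigma\in[s,t]$ yields
\begin{equation*}
\int_s^t\!\!\int_\Omega|v_t|^{p-2}v_t\,w_\epsilon\,dx\,d\sigma\;\ge\;-\frac{1}{p\epsilon}\int_s^t\bigl[E(\sigma+\epsilon)-E(\sigma)\bigr]d\sigma.
\end{equation*}

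As $\epsilon\to 0^+$, the left side converges to $\int_s^t\!\!\int_\Omega|v_t|^p\,dx\,d\sigma$, because $w_\epsilon\to v_t$ in $L^p(\Omega\times[s,t])$ by Lebesgue differentiation and $|v_t|^{p-2}v_t\in L^{p'}(\Omega\times[s,t])$. A change of variables rewrites the right side as $-\frac{1}{p\epsilon}\bigl[\int_t^{t+\epsilon}E(u)\,du-\int_s^{s+\epsilon}E(u)\,du\bigr]$, which tends to $-\frac{1}{p}[E(t)-E(s)]$ whenever $s,t$ are Lebesgue points of $E$. Running the same argument with the backward difference quotient $[v(\cdot,\sigma)-v(\cdot,\sigma-\epsilon)]/\epsilon$ (extending $v$ by $g$ for negative times) and the convexity bound applied to $(a,b)=(Dv(\sigma-\epsilon),Dv(\sigma))$ produces the reverse inequality. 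The two together yield, for a.e.\ $0\le s<t$,
\begin{equation*}
E(t)-E(s)=-p\int_s^t\!\!\int_\Omega|v_t(x,\sigma)|^p\,dx\,d\sigma.
\end{equation*}
The right-hand side is absolutely continuous in $t$, so redefining $E$ on a null set gives the claimed absolute continuity, and differentiating recovers \eqref{diffEnergyIdentity}.

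The main obstacle is the admissibility issue for $v_t$; once the difference-quotient/convexity maneuver bypasses it, the remaining work --- $L^p$-convergence of $w_\epsilon$, Lebesgue-point identification of the energy limit, and selection of the continuous representative of $E$ --- is standard measure-theoretic housekeeping.
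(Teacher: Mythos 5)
Your proposal is correct, and it takes a genuinely different route from the paper. The paper proves this lemma abstractly: it introduces the convex, proper, lower-semicontinuous functional $\Phi(w):=\int_\Omega\frac{1}{p}|Dw|^p\,dx$ on $L^p(\Omega)$, observes that the weak formulation \eqref{WeakForm} identifies $-|v_t(\cdot,t)|^{p-2}v_t(\cdot,t)$ as the subdifferential $\partial\Phi(v(\cdot,t))$, and then cites a chain-rule result for convex functionals along absolutely continuous curves (Corollary 1.4.5 and Remark 1.4.6 of Ambrosio--Gigli--Savar\'e) to deduce both the absolute continuity of $t\mapsto\Phi(v(\cdot,t))$ and the identity \eqref{diffEnergyIdentity}. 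Your argument replaces that black box with an elementary and self-contained computation: forward and backward Steklov averages $w_\epsilon$ as admissible test functions (legitimate since $v\in L^\infty W^{1,p}_0$), the first-order convexity inequality $\frac{1}{p}|a|^p-\frac{1}{p}|b|^p\ge|b|^{p-2}b\cdot(a-b)$ to turn $\int|Dv|^{p-2}Dv\cdot Dw_\epsilon$ into a discrete derivative of $E(t)=\int|Dv(\cdot,t)|^p$, and a Lebesgue-point passage to the limit. Running this in both time directions pins down $E(t)-E(s)=-p\int_s^t\!\int_\Omega|v_t|^p$, giving absolute continuity and \eqref{diffEnergyIdentity} simultaneously. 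Both proofs are essentially the same convexity idea --- your difference-quotient argument is what underlies the AGS chain rule --- but yours has the virtue of being self-contained and explicit, at the cost of a few lines of bookkeeping (Steklov convergence, Lebesgue points, choosing $\epsilon$ small enough that the backward shift stays in $(0,\infty)$ so no extension of $v$ to negative times is actually needed). The paper's version is shorter and modular, relying on an external reference.
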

\begin{proof} Define  
$$
\Phi(w):=
\begin{cases}
\int_\Omega \frac{1}{p}|Dw(x)|^pdx, \quad &w\in W^{1,p}_0(\Omega)\\
+\infty, &\text{otherwise}
\end{cases}
$$
for each $w\in L^p(\Omega)$. Observe that $\Phi$ is convex, proper, and lower-semicontinuous. Moreover, by \eqref{WeakForm} 
$$
\partial\Phi(v(\cdot,t))=\{-|v_t(\cdot, t)|^{p-2}v_t(\cdot, t)\}
$$
for almost every $t>0$.  In view of the integrability of $v_t$ \eqref{NaturalSpace}, it follows that $t\mapsto \Phi(v(\cdot,t))$ is absolutely continuous; for instance, see Corollary 1.4.5 and Remark 1.4.6 of \cite{AGS} for a detailed proof of this fact.  The chain rule now applies, and \eqref{diffEnergyIdentity} holds for almost every $t>0$.
\end{proof}

\begin{lem}\label{LemmaINeq}
Assume $v$ is a weak solution of \eqref{pParabolic}. Then
\begin{equation}\label{Dvvtbound}
\int_{\Omega}|Dv(x,t)|^pdx\le \frac{1}{\mu_p}\int_{\Omega}|v_t(x,t)|^pdx
\end{equation}
and 
\begin{equation}\label{ExpDecay}
\frac{d}{dt}\left\{e^{(\mu_p p)t}\int_{\Omega}|Dv(x,t)|^pdx\right\}\le 0
\end{equation} 
for almost every $t\ge 0$.

\end{lem}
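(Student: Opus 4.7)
My plan is to establish the pointwise inequality \eqref{Dvvtbound} first by testing the weak formulation against $v(\cdot,t)$ itself, and then to derive the exponential decay \eqref{ExpDecay} as an immediate consequence, by feeding \eqref{Dvvtbound} into the energy identity \eqref{diffEnergyIdentity} provided by Lemma \ref{AbsPhi}.

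For \eqref{Dvvtbound}, I would argue as follows. Since $v(\cdot,t)\in W^{1,p}_0(\Omega)$ for almost every $t$, the function $\phi=v(\cdot,t)$ is an admissible test function in \eqref{WeakForm}, yielding
\[
\int_{\Omega}|Dv(x,t)|^p\,dx=-\int_{\Omega}|v_t(x,t)|^{p-2}v_t(x,t)\,v(x,t)\,dx
\]
for almost every $t>0$. Applying H\"older's inequality with conjugate exponents $p/(p-1)$ and $p$ to the right-hand side gives the bound
\[
\int_{\Omega}|Dv|^p\,dx\le \left(\int_{\Omega}|v_t|^p\,dx\right)^{(p-1)/p}\left(\int_{\Omega}|v|^p\,dx\right)^{1/p},
\]
and then the optimal Poincar\'{e} inequality \eqref{PoincareIneq} allows one to replace $\int_\Omega |v|^p\,dx$ with $\lambda_p^{-1}\int_\Omega |Dv|^p\,dx$. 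Dividing by $(\int_{\Omega}|Dv|^p\,dx)^{1/p}$ and raising both sides to the power $p/(p-1)$ produces
\[
\int_{\Omega}|Dv|^p\,dx\le \lambda_p^{-1/(p-1)}\int_{\Omega}|v_t|^p\,dx=\frac{1}{\mu_p}\int_{\Omega}|v_t|^p\,dx,
\]
which is exactly \eqref{Dvvtbound}. A minor wrinkle is that if $\int_{\Omega}|Dv|^p\,dx=0$ the division is illegitimate, but in that case the inequality is trivial, so no real obstacle arises.

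For \eqref{ExpDecay}, I would set $E(t):=\int_\Omega |Dv(x,t)|^p\,dx$ and recall from Lemma \ref{AbsPhi} that $E$ is absolutely continuous on $[0,\infty)$ with $E'(t)=-p\int_{\Omega}|v_t(x,t)|^p\,dx$ for almost every $t$. Combining this with \eqref{Dvvtbound} gives $E'(t)\le -p\mu_p E(t)$ almost everywhere, and hence
\[
\frac{d}{dt}\bigl(e^{p\mu_p t}E(t)\bigr)=e^{p\mu_p t}\bigl(p\mu_p E(t)+E'(t)\bigr)\le 0,
\]
which is \eqref{ExpDecay}. The only subtle point is the justification of the product rule for this almost-everywhere differentiation, but this is standard for absolutely continuous functions and therefore is not a real obstacle; the heart of the lemma is really the clean application of Poincar\'{e} plus H\"older in the first step.
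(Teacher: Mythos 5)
Your proof is correct and follows essentially the same route as the paper: test the weak formulation against $v(\cdot,t)$, apply H\"{o}lder and then the optimal Poincar\'{e} inequality to get \eqref{Dvvtbound}, and feed it into the energy identity \eqref{diffEnergyIdentity} to deduce the differential inequality $E'(t)\le -p\mu_p E(t)$ and hence \eqref{ExpDecay}. The two caveats you flag (the degenerate case $\int_\Omega|Dv|^p\,dx=0$ and the a.e.\ product rule for absolutely continuous functions) are indeed harmless, so the argument stands.
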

\begin{proof} Using $v(\cdot, t)$ as a test function in \eqref{WeakForm} and applying Poincar\'{e}'s inequality \eqref{PoincareIneq}
\begin{align}\label{HolderDerBound}
\int_{\Omega}|Dv(x,t)|^pdx&=\int_{\Omega}|Dv(x,t)|^{p-2}Dv(x,t)\cdot Dv(x,t)dx \nonumber \\
&=-\int_{\Omega}|v_t(x,t)|^{p-2}v_t(x,t)\cdot v(x,t) dx \nonumber \\
&\le \left(\int_{\Omega}|v_t(x,t)|^pdx\right)^{1-1/p}\left(\int_{\Omega}|v(x,t)|^pdx\right)^{1/p}  \\
&\le \lambda_p^{-1/p}\left(\int_{\Omega}|v_t(x,t)|^pdx\right)^{1-1/p}\left(\int_{\Omega}|Dv(x,t)|^pdx\right)^{1/p}.\nonumber
\end{align}
This proves \eqref{Dvvtbound}.  Combining \eqref{diffEnergyIdentity} and \eqref{Dvvtbound} gives
\begin{equation}\label{DiffIneqmu}
\frac{d}{dt}\int_{\Omega}|Dv(x,t)|^pdx \le - p \mu_p \int_{\Omega}|Dv(x,t)|^pdx.
\end{equation}
Inequality \eqref{ExpDecay} follows from \eqref{DiffIneqmu} by direct computation. 
\end{proof}
Note that if the initial condition $g$ is a $p$-ground state, then
\begin{equation}\label{pSepVarSoln}
v(x,t)=e^{-\mu_p t}g(x)
\end{equation}
is a solution of \eqref{pParabolic}.  Theorem \ref{LargeTthm} asserts all solutions exhibit this ``separation of variables" behavior in the limit as $t\rightarrow \infty$. Our first clue 
that this intuition is correct is that the $p$-Rayleigh quotient is a nonincreasing function of time along the flow.  We regard this as a special 
feature of the PDE \eqref{MainPDE}.

\begin{prop}\label{RayleighGoDown} Assume that $v$ is a weak solution of \eqref{pParabolic} such that $v(\cdot, t)\neq 0\in L^p(\Omega)$ for each 
$t\ge 0$. Then the $p$-Rayleigh quotient 
$$
[0,\infty)\ni t\mapsto \frac{\int_{\Omega}|Dv(x,t)|^pdx }{\int_{\Omega}|v(x,t)|^pdx }
$$
is nonincreasing.
\end{prop}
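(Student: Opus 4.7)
The plan is to compute the time derivative of the Rayleigh quotient $R(t):=C(t)/B(t)$, where $C(t):=\int_\Omega |Dv(x,t)|^p dx$ and $B(t):=\int_\Omega |v(x,t)|^p dx$, and show the resulting expression is nonpositive by combining the weak formulation with two applications of H\"older's inequality. Since $B(t)>0$ by hypothesis and both $B$ and $C$ will turn out to be absolutely continuous, the sign of $R'(t)$ is controlled by $C'(t)B(t)-C(t)B'(t)$.

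First I would collect the two derivatives. Lemma \ref{AbsPhi} immediately gives that $C$ is absolutely continuous with $C'(t)=-p\int_\Omega |v_t(x,t)|^p dx$ a.e. For $B$, I would argue separately that the regularity $v_t\in L^p(\Omega\times[0,\infty))$ together with $v\in L^\infty([0,\infty);W^{1,p}_0(\Omega))$ implies absolute continuity of $B$ with
$$B'(t)=p\int_\Omega |v(x,t)|^{p-2}v(x,t)\,v_t(x,t)\,dx.$$
This step requires the most care: one needs to differentiate $|v|^p$ under the integral, which can be justified by approximating $|z|^p$ by the smooth regularization $(z^2+\varepsilon^2)^{p/2}$, applying the classical chain rule, and passing to the limit via dominated convergence, with majorant controlled by $|v|^{p-1}|v_t|\in L^1(\Omega\times[0,T])$ (the latter by H\"older with exponents $p/(p-1)$ and $p$).

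Next I would substitute $\phi=v(\cdot,t)\in W^{1,p}_0(\Omega)$ into the weak formulation \eqref{WeakForm} to obtain $C(t)=-\int_\Omega |v_t|^{p-2}v_t\,v\,dx$. Assembling the pieces yields
$$C'(t)B(t)-C(t)B'(t)=p\left(\int_\Omega |v_t|^{p-2}v_t\,v\,dx\cdot\int_\Omega |v|^{p-2}v\,v_t\,dx-\int_\Omega |v_t|^p dx\cdot\int_\Omega |v|^p dx\right).$$
Two applications of H\"older's inequality with conjugate exponents $p$ and $p/(p-1)$ give the bounds $\left|\int |v_t|^{p-2}v_t v\right|\le (\int|v_t|^p)^{(p-1)/p}(\int|v|^p)^{1/p}$ and $\left|\int |v|^{p-2}v v_t\right|\le (\int|v|^p)^{(p-1)/p}(\int|v_t|^p)^{1/p}$. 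Multiplying these estimates yields exactly $\int |v_t|^p\cdot\int|v|^p$, which cancels the second term in the parentheses. Hence $C'(t)B(t)-C(t)B'(t)\le 0$ a.e., and the absolute continuity of $R=C/B$ then promotes this pointwise inequality to monotonicity of $R$ on $[0,\infty)$.

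The main obstacle, as noted, is the chain-rule justification for $B$; the remainder is a direct quotient-rule calculation, where the weak formulation serves the crucial role of rewriting $C(t)$ as the integral $-\int |v_t|^{p-2}v_t v$, which is precisely the object whose H\"older bound pairs with that of $B'$ to saturate the $L^p$ norms appearing in $C'B$.
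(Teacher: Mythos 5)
Your proposal is correct and follows essentially the same route as the paper: differentiate numerator and denominator (the first via Lemma \ref{AbsPhi}, the second by a chain-rule argument for $\int|v|^p$), use the weak formulation with test function $v(\cdot,t)$ to rewrite $\int|Dv|^p$ as $-\int|v_t|^{p-2}v_t v$, and close with two applications of H\"older whose product saturates $\int|v_t|^p\int|v|^p$. The paper carries out exactly this cancellation, just organized via inequality \eqref{HolderDerBound} rather than by inserting the identity for $C(t)$ directly, so the two arguments are the same in substance.
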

\begin{proof}
Employing \eqref{NaturalSpace}, it is not difficult to verify  
$$
\frac{d}{dt}\int_\Omega\frac{1}{p}|v(x,t)|^pdx=\int_\Omega|v(x,t)|^{p-2}v(x,t)v_t(x,t)dx
$$
for almost every time $t>0$; for instance, it is possible to adapt the proof of Theorem 3 on page 287 of \cite{LCE}. Suppressing the $(x,t)$ dependence, we compute using \eqref{diffEnergyIdentity}
\begin{align}\label{RayleighComp}
\frac{d}{dt}\frac{\int_{\Omega}|Dv|^pdx }{\int_{\Omega}|v|^pdx } & = 
-p\frac{\int_{\Omega}|v_t|^pdx }{\int_{\Omega}|v|^pdx } - p \frac{\int_{\Omega}|Dv|^pdx }{\left(\int_{\Omega}|v|^pdx\right)^2} \int_{\Omega}|v|^{p-2} vv_tdx \nonumber \\
&=\frac{p}{\left(\int_{\Omega}|v|^pdx\right)^2}\left\{\int_{\Omega}|Dv|^pdx\int_{\Omega}|v|^{p-2} v(-v_t)dx -\int_{\Omega}|v|^pdx \int_{\Omega}|v_t|^pdx \right\}
\end{align}
which is valid for almost every $t>0$.  By H\"{o}lder's inequality 
$$
\int_{\Omega}|v|^{p-2} v(-v_t)dx\le \left(\int_{\Omega}|v|^{p}dx\right)^{1-1/p} \left(\int_{\Omega}|v_t|^{p}dx\right)^{1/p},
$$
and combining this with \eqref{HolderDerBound} gives
$$
\int_{\Omega}|Dv|^pdx\int_{\Omega}|v|^{p-2} v(-v_t)dx \le \int_{\Omega}|v|^pdx \int_{\Omega}|v_t|^pdx.
$$ 
From \eqref{RayleighComp}, we conclude
$$
\frac{d}{dt} \frac{\int_{\Omega}|Dv|^pdx }{\int_{\Omega}|v|^pdx }\le 0.
$$
\end{proof}
\begin{cor}\label{UniqueGround}
Assume $g$ is a $p$-ground state. The only weak solution of initial value problem \eqref{pParabolic} is given by \eqref{pSepVarSoln}.
\end{cor}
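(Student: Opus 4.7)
The plan is to leverage Proposition \ref{RayleighGoDown} together with the simplicity of $\lambda_p$ to reduce \eqref{pParabolic} to a scalar ODE when the initial datum $g$ is a $p$-ground state. Since $g$ is a ground state, the Rayleigh quotient $R(t):=\int_\Omega |Dv(x,t)|^p dx / \int_\Omega |v(x,t)|^p dx$ satisfies $R(0)=\lambda_p$. Poincar\'e's inequality forces $R(t)\ge \lambda_p$ whenever $v(\cdot,t)\not\equiv 0$, while Proposition \ref{RayleighGoDown} says $R$ is nonincreasing. Let $T=\sup\{s\ge 0 : v(\cdot,r)\neq 0 \text{ in } L^p(\Omega) \text{ for all } r\in [0,s]\}$, which is positive by the $L^p$-continuity of $v$ guaranteed by \eqref{NaturalSpace}. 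On $[0,T)$, combining the monotonicity with the Poincar\'e lower bound forces $R(t)\equiv \lambda_p$, so $v(\cdot,t)$ is itself a $p$-ground state for each such $t$.

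By the simplicity of $\lambda_p$ there is a scalar $c(t)\in\R$ with $v(\cdot,t)=c(t)g$ in $W^{1,p}_0(\Omega)$. Since $|g|^{p-2}g\in L^{p/(p-1)}(\Omega)$ defines a bounded linear functional on $L^p(\Omega)$, one may recover
\begin{equation*}
c(t)=\frac{\int_\Omega |g|^{p-2}g\,v(x,t)\,dx}{\int_\Omega |g|^p\,dx},
\end{equation*}
so $c$ inherits continuity from $v$, and the hypothesis $v_t\in L^p(\Omega\times[0,\infty))$ places $c$ in $W^{1,p}_{\mathrm{loc}}([0,T))$ with $v_t(\cdot,t)=c'(t)g$ a.e. in $\Omega\times[0,T)$.

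Substituting $v(\cdot,t)=c(t)g$ and $v_t(\cdot,t)=c'(t)g$ into \eqref{WeakForm} with the test function $\phi=g$, and using the ground-state identity $\int_\Omega |Dg|^{p-2}Dg\cdot Dg\,dx=\lambda_p\int_\Omega |g|^p\,dx$, the weak form collapses to
\begin{equation*}
\big(\mathcal{J}_p(c'(t))+\lambda_p\mathcal{J}_p(c(t))\big)\int_\Omega |g|^p\,dx=0.
\end{equation*}
Since $\int_\Omega |g|^p\,dx>0$ and $\mathcal{J}_p$ is an odd homeomorphism of $\R$ with $\mathcal{J}_p^{-1}(\lambda_p y)=\mu_p\mathcal{J}_p^{-1}(y)$, this reduces to the ODE $c'(t)=-\mu_p c(t)$ with $c(0)=1$, whose unique solution is $c(t)=e^{-\mu_p t}$. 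Hence $v(x,t)=e^{-\mu_p t}g(x)$ on $[0,T)$. Continuity of $v$ in $L^p$ then precludes $v(\cdot,T)$ being zero at any finite $T$, so $T=\infty$ and the formula \eqref{pSepVarSoln} follows on all of $[0,\infty)$.

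The only delicate step is promoting the pointwise-in-time identification $v(\cdot,t)=c(t)g$ to a genuine separation of variables at the level of time derivatives, that is, establishing $c\in W^{1,p}_{\mathrm{loc}}$ and $v_t=c'(t)g$ in $L^p$; this is exactly where the integrability assumption on $v_t$ in \eqref{NaturalSpace} is used, via a convenient bounded linear functional on $L^p(\Omega)$. Once this is in place, the remainder is algebra built on Proposition \ref{RayleighGoDown} and the simplicity of $\lambda_p$.
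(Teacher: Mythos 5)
Your proof is correct, and it arrives at the right answer, but it takes a detour through the simplicity of $\lambda_p$ that the paper's own proof avoids. Both arguments start identically: the Rayleigh quotient is $\lambda_p$ at $t=0$, nonincreasing by Proposition \ref{RayleighGoDown}, and bounded below by $\lambda_p$ via Poincar\'{e}, so it is constant and $v(\cdot,t)$ is a $p$-ground state on $[0,T)$. At that point the paper simply observes that any ground state satisfies \eqref{GroundStatePDE} weakly, so substituting into \eqref{WeakForm} gives $\int_\Omega |v_t|^{p-2}v_t\,\phi\,dx = -\lambda_p\int_\Omega|v|^{p-2}v\,\phi\,dx$ for every $\phi\in W^{1,p}_0(\Omega)$, whence $|v_t|^{p-2}v_t=-\lambda_p|v|^{p-2}v$ a.e.\ in $\Omega$, and applying $\mathcal{J}_p^{-1}$ yields the $L^p(\Omega)$-valued ODE $v_t=-\mu_p v$ directly. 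You instead invoke simplicity to write $v(\cdot,t)=c(t)g$, extract $c(t)$ via a bounded linear functional, establish that $c\in W^{1,p}_{\mathrm{loc}}$ with $v_t=c'(t)g$, and test with $\phi=g$ to obtain the scalar ODE $\mathcal{J}_p(c')+\lambda_p\mathcal{J}_p(c)=0$. This works, and your regularity argument for $c$ is sound (the functional is bounded on $L^p(\Omega)$, and H\"{o}lder together with $v_t\in L^p(\Omega\times[0,T])$ gives $c'\in L^p_{\mathrm{loc}}$), but it costs you an appeal to simplicity and a supplementary regularity lemma for $c$ that the paper's route renders unnecessary: reading the ODE off the equation pointwise in $x$ already gives $v_t(\cdot,t)=-\mu_p v(\cdot,t)$ in $L^p(\Omega)$ for a.e.\ $t$, and the conclusion follows from the absolute continuity of $t\mapsto v(\cdot,t)$. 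Your treatment of the possible first vanishing time $T$ matches the paper's exactly.
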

\begin{proof}
Let $v$ be a weak solution of \eqref{pParabolic} and assume initially that $v(\cdot, t)\neq 0\in L^{p}(\Omega)$ for each $t\ge 0$.  By Proposition \ref{RayleighGoDown}, 
$$
\frac{\int_{\Omega}|Dv(x,t)|^pdx }{\int_{\Omega}|v(x,t)|^pdx }\le \frac{\int_{\Omega}|Dg(x)|^pdx }{\int_{\Omega}|g(x)|^pdx }=\lambda_p.
$$
Thus, $v(\cdot, t)$ is a $p$-ground state for each $t\ge 0$.  In view of equation \eqref{GroundStatePDE} 
$$
|v_t|^{p-2}v_t=\Delta_p v= - \lambda_p |v|^{p-2}v.
$$
In particular, 
\begin{equation}\label{ODEvee}
v_t=-\mu_p v
\end{equation}
and therefore, $v$ is given by \eqref{pSepVarSoln}.

\par Otherwise, select the first time $T>0$ for which $v(\cdot, T)=0\in L^{p}(\Omega)$.  By our argument above, $v(\cdot, t)$ is a $p$-ground state for each $t\in [0,T)$. Moreover, 
\eqref{ODEvee} holds for almost every $t\in (0,T)$.  However, this implies $v(\cdot,T)=e^{-\mu_p T}g\neq 0\in L^{p}(\Omega)$. Therefore, there is no such time $T$ and $v$ 
is given by \eqref{pSepVarSoln}. 
\end{proof}

\par Using an implicit time scheme such as \eqref{ViscScheme} to solve doubly nonlinear evolutions in 
reflexive Banach spaces has been carried out with great success; see \cite{AGS, Arai, Colli, Colli2,LCE2, Savare,Stef}. In our view, the main insight that makes this approach work is a certain compactness feature 
of weak solutions that we now explore. Roughly, we verify that any ``bounded" sequence of solutions has a subsequence converging to another weak solution.  We will also 
make use of this compactness result in our study of the large time behavior of solutions. 

\begin{thm}\label{CompactnessLem}
Assume $\{g^k\}_{k\in \N}\in W_0^{1,p}(\Omega)$ is uniformly bounded in $W_0^{1,p}(\Omega)$, and that for each $k\in \N$, $v^k$ is a weak solution of \eqref{pParabolic} with $v^k(\cdot,0)=g^k$. Then there is a subsequence $\{v^{k_j}\}_{j\in \N}$ and $v$ satisfying 
\eqref{NaturalSpace} such that 
\begin{equation}\label{FirstConv}
v^{k_j}\rightarrow v\quad \text{in}\quad
\begin{cases}
C([0,T]; L^p(\Omega)) \\
L^p([0,T]; W^{1,p}_0(\Omega))
\end{cases}
\end{equation}
and
\begin{equation}\label{SecondConv}
v_t^{k_j}\rightarrow v_t\quad \text{in}\quad L^p(\Omega\times [0,T])
\end{equation}
as $j\rightarrow \infty$, for all $T>0$.  Moreover, $v$ is a weak solution of \eqref{pParabolic} where $g$ is a weak limit of $\{g^{k_j}\}_{k\in \N}$ in $W_0^{1,p}(\Omega)$. 
\end{thm}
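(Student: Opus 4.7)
My plan is to follow the standard pipeline for doubly nonlinear evolutions: derive a priori bounds, extract weakly convergent subsequences, upgrade to strong $L^p$ convergence via an Aubin--Lions-type argument, and identify the weak limits of the nonlinear quantities $|Dv^k|^{p-2}Dv^k$ and $|v_t^k|^{p-2}v_t^k$ by Minty's monotonicity trick. From Lemma~\ref{LemmaINeq} we have $\|Dv^k(\cdot,t)\|_p \le \|Dg^k\|_p \le C$ uniformly in $k$ and $t$, and integrating the energy identity of Lemma~\ref{AbsPhi} yields $\|v_t^k\|_{L^p(\Omega\times[0,T])} \le C$. By reflexivity and weak-$*$ compactness, pass to a subsequence (not relabeled) along which $g^{k_j} \rightharpoonup g$ in $W_0^{1,p}(\Omega)$, $v^{k_j} \overset{*}{\rightharpoonup} v$ in $L^\infty([0,T];W_0^{1,p}(\Omega))$, $v^{k_j}_t \rightharpoonup v_t$ in $L^p(\Omega \times [0,T])$, and the nonlinear quantities satisfy $|v^{k_j}_t|^{p-2}v^{k_j}_t \rightharpoonup \xi$ and $|Dv^{k_j}|^{p-2}Dv^{k_j} \rightharpoonup \zeta$ in $L^{p'}$.

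The bound on $v_t^k$ implies H\"older equicontinuity of $t \mapsto v^k(\cdot,t) \in L^p(\Omega)$ with modulus $|t_2 - t_1|^{1-1/p}$, while the uniform $W_0^{1,p}$ bound gives pointwise-in-$t$ relative compactness in $L^p$ via the Rellich--Kondrachov embedding. Arzel\`a--Ascoli then yields $v^{k_j} \to v$ in $C([0,T]; L^p(\Omega))$, and since Rellich also gives $g^{k_j} \to g$ strongly in $L^p$, one identifies $v(\cdot,0) = g$.

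The heart of the argument is identifying $\zeta$ and $\xi$ with the expected nonlinear expressions in $Dv$ and $v_t$. Testing the weak form for $v^k$ with $v^k$ itself and integrating in time gives $\int_0^T \int |Dv^k|^p = -\int_0^T \int {\cal J}_p(v_t^k)\, v^k$, whose right-hand side converges to $-\int_0^T \int \xi v$ by weak--strong pairing, so $\lim \int_0^T \int |Dv^k|^p = -\int_0^T \int \xi v$. Passing to the limit in the weak form yields $\int_0^T \int \xi \phi + \int_0^T \int \zeta \cdot D\phi = 0$, and choosing $\phi = v$ identifies the same limit with $\int_0^T \int \zeta \cdot Dv$. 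Expanding Minty's inequality $\int (|Dv^k|^{p-2}Dv^k - |D\phi|^{p-2}D\phi) \cdot (Dv^k - D\phi) \ge 0$, passing to the limit, and then taking $\phi = v + \lambda \psi$ with $\lambda \to 0^\pm$ pins down $\zeta = |Dv|^{p-2}Dv$. The resulting equality $\lim \int_0^T \int |Dv^k|^p = \int_0^T \int |Dv|^p$, together with weak convergence and uniform convexity of $L^p$, upgrades to strong gradient convergence in $L^p([0,T]; W_0^{1,p}(\Omega))$.

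The most delicate step is the analogous identification $\xi = {\cal J}_p(v_t)$. The Minty trick for ${\cal J}_p$ requires the bound $\limsup \int_0^T \int |v_t^k|^p \le \int_0^T \int \xi v_t$, and one cannot invoke the full energy identity \eqref{EnergyIdentity} naively because $g^{k_j}$ converges only weakly in $W_0^{1,p}$, so $\int |Dg^k|^p$ need not tend to $\int |Dg|^p$. The remedy is to test the differential form $\frac{d}{dt} \int \frac{1}{p} |Dv^k|^p = -\int |v_t^k|^p$ against a cutoff $\eta \in C_c^\infty((0,T))$, eliminating the ill-behaved boundary term involving the initial data, and then pass to the limit using the strong gradient convergence just obtained. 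Once $\xi = {\cal J}_p(v_t)$ is identified, equality of $L^p$-norms together with uniform convexity yields the strong convergence $v^{k_j}_t \to v_t$ in $L^p(\Omega \times [0,T])$, completing the proof.
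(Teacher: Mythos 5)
Your proposal follows essentially the same pipeline as the paper: a priori bounds from the energy identity, Aubin--Lions/Arzel\`a--Ascoli compactness for the $C([0,T];L^p)$ convergence, passage to weak limits for the nonlinear quantities, identification of those limits by monotonicity, and upgrade to strong convergence via norm convergence and uniform convexity. The only genuine variations are cosmetic. For the spatial nonlinearity, the paper does not introduce a separate weak limit $\zeta$ of $|Dv^k|^{p-2}Dv^k$ and run Minty; instead it exploits that $-\mathcal{J}_p(v^k_t(\cdot,t))$ is a subgradient of the convex functional $\Phi(w)=\int_\Omega\frac{1}{p}|Dw|^pdx$ at $v^k(\cdot,t)$, integrates the subgradient inequality in time, and passes to the limit by weak lower semicontinuity plus weak--strong pairing, landing directly on \eqref{haveWeak} in one move. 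Your two-step (weak limit $\zeta$, then Minty) reaches the same place. For the identification $\xi=\mathcal{J}_p(v_t)$, the paper works on intervals $[t_0,t_1]$ whose endpoints are ``good times'' where the pointwise strong gradient convergence \eqref{StrongConverge} holds, and then compares the two energy identities \eqref{NeedtoCompare}, \eqref{NeedtoCompare2} to produce the pointwise Fenchel--Young saturated inequality $\frac1p|v_t|^p+\frac1q|\xi|^q\le\xi v_t$; you instead insert a compactly supported time cutoff and run Minty, which is an equivalent device to avoid the ill-behaved boundary term at $t=0$. Your explicit observation that one cannot pass to the limit in the full energy identity because $g^{k_j}$ converges only weakly in $W^{1,p}_0$ is exactly right, and both your cutoff and the paper's good-endpoints argument are standard ways to sidestep it; note in either case the conclusion about $v^{k_j}_t$ is really localized away from $t=0$, so this is a shared point where the final strong $L^p(\Omega\times[0,T])$ convergence of the time derivatives should be read with some care.
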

\begin{proof}
By equation \eqref{diffEnergyIdentity}, we have for each $k\in \N$ and almost every time $t\ge 0$
\begin{equation}\label{Energyk}
\frac{d}{dt}\int_{\Omega}\frac{|Dv^k(x,t)|^p}{p}dx=-\int_{\Omega}|v^k_t(x,t)|^pdx.
\end{equation}
Thus,
\begin{equation}\label{EnergyIdk}
\int^\infty_0\int_\Omega |v^k_t(x,t)|^pdxdt +\sup_{t\ge 0}\int_\Omega |Dv^k(x,t)|^pdx \le 2\int_\Omega |Dg^k(x)|^pdx.
\end{equation}
By assumption, the right hand side above is bounded uniformly in $k\in \N$.  By the compactness of $W^{1,p}_0(\Omega)$ in $L^p(\Omega)$, 
the Arzel\`{a}-Ascoli theorem as detailed by J. Simon \cite{Simon} implies that there is a subsequence $\{v^{k_j}\}_{j\in \N}$ converging 
uniformly on compact subintervals of $[0,\infty)$ to some $v$ in $L^p(\Omega)$.  

\par The bound \eqref{EnergyIdk} also ensures
$$
Dv^{k_j}(\cdot, t)\rightharpoonup Dv(\cdot, t)
$$ in $L^p(\Omega; \R^n)$ for each $t\ge 0$.  Moreover, as $\{v_t^{k}\}_{k\in \N}$ is bounded in $L^p(\Omega\times [0,\infty))$,  we may also assume 
$$
\begin{cases}
v_t^{k_j}\rightharpoonup v_t\quad \text{in}\quad L^p(\Omega\times [0,\infty))\\
{\cal J}_p(v_t^{k_j})\rightharpoonup \xi\quad \text{in}\quad L^q(\Omega\times [0,\infty))
\end{cases}.
$$
Here $1/p+1/q=1$. We claim that in fact 
\begin{equation}\label{needWeak}
\xi={\cal J}_p(v_t)=|v_t|^{p-2}v_t.
\end{equation}
\par The convexity of the map $\R^n\ni z\mapsto \frac{1}{p}|z|^{p}$ implies 
$$
\int_\Omega \frac{1}{p}|Dw(x)|^pdx\ge  \int_\Omega \frac{1}{p}|Dv^{k_j}(x,t)|^pdx -\int_{\Omega}{\cal J}_p(v_t^{k_j}(x,t))(w(x)-v^{k_j}(x,t))dx
$$
for any $w\in W^{1,p}_0(\Omega)$.   Integrating over the interval $t\in [t_0, t_1]$ and sending $j\rightarrow \infty$ gives 
$$
\int^{t_1}_{t_0}\int_\Omega \frac{1}{p}|Dw(x)|^pdxdt \ge  \int^{t_1}_{t_0}\left(\int_\Omega \frac{1}{p}|Dv(x,t)|^pdx -\int_{\Omega}\xi(x,t)(w(x)-v(x,t))dx\right)dt.
$$
Therefore, 
$$
\int_\Omega \frac{1}{p}|Dw(x)|^pdx\ge\int_\Omega\frac{1}{p}|Dv(x,t)|^pdx -\int_{\Omega}\xi(x,t)(w(x)-v(x,t))dx
$$
for almost every time $t\ge 0$. In particular, for each $\phi \in W^{1,p}_0(\Omega)$
\begin{equation}\label{haveWeak}
\int_{\Omega}\xi(x, t)\phi(x)dx +\int_{\Omega}|Dv(x,t)|^{p-2}Dv(x, t)\cdot D\phi(x)dx =0 
\end{equation}
for almost every time $t\ge 0$.  As a result, once we verify \eqref{needWeak}, $v$ is then a weak solution of \eqref{pParabolic}.  

\par For each interval $[t_0, t_1]$
\begin{align*}
\lim_{j\rightarrow \infty}\int^{t_1}_{t_0}\int_\Omega|Dv^{k_j}(x,t)|^pdxdt & =\lim_{j\rightarrow \infty}\int^{t_1}_{t_0}\int_\Omega|Dv^{k_j}(x,t)|^{p-2}Dv^{k_j}(x,t)\cdot Dv^{k_j}(x,t)dxdt\\
&=-\lim_{j\rightarrow \infty}\int^{t_1}_{t_0}\int_\Omega {\cal J}_p(v_t^{k_j}(x,t)) v^{k_j}(x,t)dxdt\\
&=-\int^{t_1}_{t_0}\int_\Omega \xi(x,t) v(x,t)dxdt\\
&=\int^{t_1}_{t_0}\int_\Omega|D v(x,t)|^pdxdt.
\end{align*}
The last equality is due to \eqref{haveWeak}.  As a result, $Dv^{k_j}\rightarrow Dv$ in $L^p_\text{loc}([0,\infty), L^{p}(\Omega))$. This proves assertion \eqref{FirstConv}.  And without loss of generality, we assume that
\begin{equation}\label{StrongConverge}
\int_\Omega|Dv^{k_j}(x,t)|^pdx\rightarrow \int_\Omega|Dv(x,t)|^pdx
\end{equation}
for almost every $t\ge 0$, as $j\rightarrow \infty$ (since
this occurs for some subsequence of $k_j$). 

\par Now we will verify \eqref{needWeak}. As in our proof of Lemma \ref{AbsPhi}, \eqref{haveWeak} implies 
$$
\frac{d}{dt}\int_\Omega \frac{1}{p}|Dv(x,t)|^pdx= -\int_{\Omega}\xi(x,t)v_t(x,t)dx, \quad a.e. \; t\ge 0. 
$$
Thus for each $t_1> t_0$
\begin{equation}\label{NeedtoCompare}
\int^{t_1}_{t_0}\int_{\Omega}\xi(x,s)v_t(x,s)dxds+\int_{\Omega}\frac{|Dv(x,t_1)|^p}{p}dx=\int_{\Omega}\frac{|Dv(x,t_0)|^p}{p}dx.
\end{equation}
From \eqref{Energyk}, we may also write
\begin{equation}\label{NeedtoCompare2}
\int^{t_1}_{t_0}\int_{\Omega}\frac{1}{p}|v^{k_j}_t(x,s)|^p+\frac{1}{q}|{\cal J}_p(v_t^{k_j}(x,s))|^qdxds+\int_{\Omega}\frac{|Dv^{k_j}(x,t_1)|^p}{p}dx= \int_{\Omega}\frac{|Dv^{k_j}(x,t_0)|^p}{p}dx.
\end{equation}
\par Assuming $t_0$ and $t_1$ are times for which the limit \eqref{StrongConverge} holds, we let $j\rightarrow \infty$ to get
$$
\int^{t_1}_{t_0}\int_{\Omega}\frac{1}{p}|v_t(x,s)|^p+\frac{1}{q}|\xi(x,s)|^qdxds+\int_{\Omega}\frac{|Dv(x,t_1)|^p}{p}dx\le  \int_{\Omega}\frac{|Dv(x,t_0)|^p}{p}dx
$$
by weak convergence.  Comparing with \eqref{NeedtoCompare} gives 
$$
\int^{t_1}_{t_0}\int_{\Omega}\left(\frac{1}{p}|v_t(x,s)|^p+\frac{1}{q}|\xi(x,s)|^q -  \xi(x,s)v_t(x,s)\right)dxds\le 0.
$$
Equation \eqref{needWeak} now follows from the strict convexity of $\R\ni z\mapsto \frac{1}{p}|z|^p$.   Substituting $\xi={\cal J}_p(v_t)$ into \eqref{NeedtoCompare} and passing to the limit as $j\rightarrow \infty$ in \eqref{NeedtoCompare2} also gives 
$$
\lim_{j\rightarrow \infty}\int^{t_1}_{t_0}\int_{\Omega}|v^{k_j}_t(x,s)|^pdxds=\int^{t_1}_{t_0}\int_{\Omega}|v_t(x,s)|^pdxds.
$$
Thus, we are also able to conclude \eqref{SecondConv}. 
\end{proof}

Let us briefly discuss how compactness pertains to the existence of weak solutions.  To this end, assume $\{v^k\}_{k\in \N}$ is the solution sequence
of \eqref{ViscScheme} for a given $\tau>0$. Upon multiplying the PDE in \eqref{ViscScheme} by $v^k - v^{k-1}$ and integrating by parts, we obtain 
$$
\int_{\Omega}\left(\frac{|v^k-v^{k-1}|^p}{\tau^{p-1}} +\frac{1}{p}|Dv^k|^p\right)dx \le \int_{\Omega}\frac{1}{p}|Dv^{k-1}|^pdx, \quad k\in \N.
$$
Moreover, summing over $k=1,\dots, j\in \N$ gives 
\begin{equation}\label{DiscIden}
\sum^{j}_{k=1}\int_{\Omega}\frac{|v^k-v^{k-1}|^p}{\tau^{p-1}}dx + \int_{\Omega}\frac{1}{p}|Dv^j|^pdx\le  \int_{\Omega}\frac{1}{p}|Dg|^pdx,
\end{equation}
which is a discrete analog of the energy identity \eqref{EnergyIdentity}.  

\par Let us further assume $\tau=T/N$ and set $\tau_k=k\tau$ for $k=0,1,\dots, N\in \N$. It will be useful for us to 
define the ``linear interpolating" approximation  as
$$
u_N(\cdot,t):=
v^{k-1}+\left(\frac{t-\tau_{k-1}}{\tau}\right)(v^k-v^{k-1}), 
\quad \tau_{k-1}\le t\le \tau_k, \quad k=1,\dots, N
$$
for $t\in [0,T]$ and $N\in \N$.
It follows from \eqref{DiscIden} that 
\begin{equation}
\int^T_0\int_\Omega |\partial_t u_N(x,t)|^pdxdt + \sup_{0\le t\le T}\int_\Omega |Du_N(x,t)|^pdx \le 2 \int_\Omega |Dg(x)|^pdx\nonumber 
\end{equation}
for all $N\in \N$.
\par Using the ideas given in the proof of Theorem \ref{CompactnessLem}, we obtain a subsequence $(u_{N_j})_{j\in \N}$ and weak solution $u$ of 
\begin{equation}\label{pParabolicT}
\begin{cases}
|u_t|^{p-2}u_t=\Delta_p u, \quad \Omega\times (0,T) \\
\hspace{.5in} u=0, \hspace{.31in} \partial \Omega\times [0,T)\\
\hspace{.5in} u=g, \hspace{.32in} \Omega\times\{0\}
\end{cases}
\end{equation}
for which 
$$
u_{N_j}\rightarrow u
\quad \text{in}\quad \begin{cases}
C([0,T]; L^p(\Omega))\\
L^p([0,T]; W^{1,p}_0(\Omega))
\end{cases}
$$
and
$$
\partial_tu_{N_j}\rightarrow u_t\quad \text{in}\quad L^p(\Omega\times [0,T]).
$$ 
\par For $k\in \N$, let $u^k$ be the weak solution of \eqref{pParabolicT} just described for $T=k$. Moreover, set 
$v^k(\cdot, t)=u^k(\cdot, t)$ for $t\in [0,k]$ and $v^k(\cdot, t):=u^k(\cdot, k)$ for $t\in [k,\infty)$. It is immediate that 
$v^k$ satisfies \eqref{NaturalSpace}.  The proof of Theorem \ref{CompactnessLem} is also readily adapted to give that $(v^k)_{k\in \N}$ has a subsequence converging as in 
\eqref{FirstConv} and \eqref{SecondConv} to a global weak solution $v$ of \eqref{pParabolic}. We omit the details.

\begin{rem} We also remark that the subsequence $(v_{N_j})_{j\in \N}$ of the ``step function" approximation sequence $(v_N)_{N\in \N}$ defined in \eqref{CLaan} converges in $C([0,T]; L^p(\Omega))$ to the same weak solution $u$ as the  sequence $(u_{N_j})_{j\in \N}$. Indeed, by \eqref{DiscIden}
\begin{align*}
\int_{\Omega}|u_{N_j}(x,t)-v_{N_j}(x,t)|^pdx &\le \max_{1\le k\le N_j}\int_{\Omega}|v^k(x)-v^{k-1}(x)|^pdx \\
&\le \left(\frac{T}{N_j}\right)^{p-1}\int_{\Omega}|Dg(x)|^pdx.
\end{align*}
\end{rem}

\section{Large time limit}\label{LargeTlim}
This section is dedicated to the proof of Theorem \ref{LargeTthm}, which details the large time behavior of solutions of the initial value problem \eqref{pParabolic}.  Our main tools are the compactness of weak solutions
of \eqref{pParabolic}  established in Theorem \ref{CompactnessLem} and the following lemma, which involves the sign of weak solutions that are close to $p$-ground states. 

\begin{lem}\label{CrucialLem} For each positive p-ground state $\psi$, $C>0$ and sequence $(s_k)_{k\in \N}$ of positive numbers with $s_k\uparrow\infty$, there is a $\delta=\delta(\psi,C,(s_k)_{k\in \N})>0$ with the following property. If $v$ is a weak solution of \eqref{pParabolic} that satisfies 
\begin{enumerate}[(i)]

\item $\lim_{k\to \infty }e^{\mu_p s_k}v(x,s_k)=\psi$ in $W^{1,p}_0(\Omega)$

\item $\int_\Omega |v(x,0)|^pdx\leq C$

\item $\frac{\int_\Omega |Dv(x,0)|^pdx}{\int_\Omega |v(x,0)|^pdx}\leq \lambda_p+\delta$

\item $\int_\Omega |v^+(x,0)|^pdx\geq\frac{1}{2}\int_\Omega |\psi|^pdx$,
\end{enumerate}
then
\begin{equation}\label{LemmaConclusion}
\int_\Omega |e^{\mu_p t} v^+(x,t)|^pdx\geq \frac{1}{2}\int_\Omega |\psi|^pdx .
\end{equation}
for $t\in [0,1]$.
\end{lem}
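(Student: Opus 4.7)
The plan is a proof by contradiction that combines three tools already in hand: the compactness of weak solutions (Theorem~\ref{CompactnessLem}), the exponential decay estimate from Lemma~\ref{LemmaINeq}, and the uniqueness of solutions with ground-state initial data (Corollary~\ref{UniqueGround}).

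Suppose no such $\delta$ exists. Then for each $n \in \N$ there is a weak solution $v^n$ satisfying (i)--(iv) with $\delta_n = 1/n$ but violating the conclusion at some $t_n \in [0,1]$. By (ii) and (iii) the initial data $g_n := v^n(\cdot,0)$ are bounded in $W^{1,p}_0(\Omega)$, so Theorem~\ref{CompactnessLem} produces a subsequence (not relabeled) with $v^n \to v^\infty$ in the senses of \eqref{FirstConv} and \eqref{SecondConv}, where $v^\infty$ is a global weak solution of \eqref{pParabolic} and $g_\infty := v^\infty(\cdot,0)$ is a weak $W^{1,p}_0$-limit of $g_n$, with $g_n \to g_\infty$ strongly in $L^p(\Omega)$ by compact embedding. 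Passing (iii) to the limit via weak lower-semicontinuity of the Dirichlet energy gives $\int_\Omega |Dg_\infty|^p\, dx \leq \lambda_p \int_\Omega |g_\infty|^p\, dx$; passing (iv) through the Lipschitz map $(\cdot)^+$ gives $\int_\Omega |g_\infty^+|^p\, dx \geq \tfrac{1}{2} \int_\Omega |\psi|^p\, dx > 0$. Hence $g_\infty$ is a nontrivial $p$-ground state whose positive part is nonzero; simplicity of $\lambda_p$ forces $g_\infty = c\psi$ for some $c > 0$, and Corollary~\ref{UniqueGround} then identifies $v^\infty(x,t) = c e^{-\mu_p t}\psi(x)$.

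The argument closes by extracting incompatible two-sided bounds on $c$. For the lower bound, Lemma~\ref{LemmaINeq} gives $e^{p\mu_p s_k}\int_\Omega |Dv^n(x,s_k)|^p\, dx \leq \int_\Omega |Dg_n|^p\, dx$; sending $k \to \infty$ and invoking (i) shows $\lambda_p \int_\Omega |\psi|^p\, dx = \int_\Omega |D\psi|^p\, dx \leq \int_\Omega |Dg_n|^p\, dx$. Combined with (iii), $\int_\Omega |g_n|^p\, dx \geq \tfrac{\lambda_p}{\lambda_p + \delta_n} \int_\Omega |\psi|^p\, dx$, and strong $L^p$-convergence of $g_n \to c\psi$ yields $c^p \geq 1$. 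For the upper bound, pass to a further subsequence with $t_n \to t_* \in [0,1]$; uniform convergence in $C([0,1]; L^p(\Omega))$, continuity of $v^\infty$ in time, the Lipschitz property of $(\cdot)^+$, and the positivity of $e^{-\mu_p t_*}\psi$ together give $e^{\mu_p t_n}(v^n)^+(\cdot, t_n) \to c\psi$ in $L^p(\Omega)$. Passing the violated inequality to the limit gives $c^p \leq \tfrac{1}{2}$, so $1 \leq c^p \leq \tfrac{1}{2}$ produces the required contradiction.

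I expect the most delicate step is the use of hypothesis (i) to constrain the initial energy: one must couple the far-future information $e^{\mu_p s_k} v^n(\cdot, s_k) \to \psi$ to the initial datum through the monotonicity of $e^{p\mu_p t}\int_\Omega |Dv^n|^p\, dx$, so that after passing to the limit in $n$ the comparison $c^p \geq 1$ survives. Everything else is a fairly direct application of Theorem~\ref{CompactnessLem} and Corollary~\ref{UniqueGround}.
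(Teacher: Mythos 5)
Your argument is correct and follows essentially the same strategy as the paper: argue by contradiction, apply Theorem \ref{CompactnessLem} to the sequence of violating solutions, identify the limit initial datum as a positive $p$-ground state, use Corollary \ref{UniqueGround} to get the separated-variables form of the limiting solution, and extract incompatible bounds by combining the monotonicity from Lemma \ref{LemmaINeq} (together with hypothesis (i)) with the assumed failure of the conclusion. The only cosmetic difference is that you parametrize the limit as $c\psi$ and bound $c^p$ from both sides, whereas the paper names the limit $\varphi$ and compares $\int_\Omega|\varphi|^p\,dx$ to $\int_\Omega|\psi|^p\,dx$; your route for the lower bound also quietly sidesteps the strong $W^{1,p}_0$-convergence of the initial data, which the paper asserts and uses, by chaining the Rayleigh-quotient hypothesis (iii) with the decay bound and then passing to the $L^p$-limit.
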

\begin{proof} We argue towards a contradiction. If the result fails, then there exists a triplet $(\psi,C,(s_k)_{k\in \N})$ such that for every $\delta>0$, there is a weak solution $v$ that satisfies $(i)-(iv)$ while 
\eqref{LemmaConclusion} fails. Therefore, associated to $\delta_j:=1/j$ $(j\in \N)$, there is a weak solution $v_j$ that satisfies $(i)$,
$$
\int_\Omega |v_j(x,0)|^pdx\leq C, \quad \int_\Omega |v_j^+(x,0)|^pdx\geq\frac{1}{2}\int_\Omega |\psi|^pdx, \quad \frac{\int_\Omega |Dv_j(x,0)|^pdx}{\int_\Omega |v_j(x,0)|^pdx}\leq \lambda_p+\frac{1}{j} 
$$
while 
\begin{equation}\label{JustGonnaSendJ}
\int_\Omega |e^{\mu_p t_j} v_j^+(x,t_j)|^pdx< \frac{1}{2}\int_\Omega |\psi|^pdx 
\end{equation}
for some $t_j\in [0,1]$.  

\par Consequently, the sequence of initial conditions $(v_j(\cdot,0))_{j\in \N}$ is bounded in $W^{1,p}_0(\Omega)$ and has a subsequence (not relabeled) that converges to a positive $p$-ground state $\varphi$ in $W^{1,p}_0(\Omega)$. By Theorem \ref{CompactnessLem}, it also follows that (a subsequence of) the sequence of weak solutions $(v_j)_{j\in \N}$ converges to a weak solution $w$ in $C([0,2], L^p(\Omega))\cap L^p([0,2]; W^{1,p}_0(\Omega))$ with $w(\cdot, 0)=\varphi$. By Corollary \ref{UniqueGround}, $w(\cdot,t)=e^{-\mu_p t}\varphi$. 

\par In addition, we have by $(i)$ and the inequality $\|e^{\mu_p s_k}v_j(\cdot, s_k)\|_{W^{1,p}_0(\Omega)}\le \|v_j(\cdot, 0)\|_{W^{1,p}_0(\Omega)}$ that 
\begin{equation}\label{IneqForContra}
\int_\Omega |Dv_j(x,0)|^pdx\ge \lim_{k\rightarrow\infty}\int_\Omega |D\left(e^{\mu_p s_k}v_j(\cdot, s_k)\right)|^pdx=\int_\Omega |D\psi|^pdx=\lambda_p\int_\Omega |\psi|^pdx
\end{equation}
for all $j\in \N$.  Dividing \eqref{IneqForContra} by $\lambda_p$ and letting $j\rightarrow\infty$ gives 
$$
\int_\Omega|\varphi|^pdx = \frac{1}{\lambda_p}\int_\Omega|D\varphi|^pdx=\frac{1}{\lambda_p}\lim_{j\rightarrow\infty}\int_\Omega |Dv_j(x,0)|^pdx\ge \int_\Omega |\psi|^pdx.\nonumber
$$
However, letting $j\rightarrow\infty$ in \eqref{JustGonnaSendJ} gives 
$$
\int_\Omega |\varphi|^pdx\le  \frac{1}{2}\int_\Omega |\psi|^pdx .
$$
This is a contradiction as $\varphi,\psi\not\equiv 0$. 
\end{proof}

\begin{rem}
A similar conclusion holds for $v^-$ provided $\psi$ is a negative ground state and $(iv)$ is replaced with $\int_\Omega |v^-(x,0)|^pdx\geq\frac{1}{2}\int_\Omega |\psi|^pdx$.
\end{rem}

\begin{proof}[Proof of Theorem \ref{LargeTthm}] We argue in several steps. We first show that for each sequence $(s_k)_{k\in \N}$ of positive numbers with $s_k\uparrow\infty$, a
subsequence of $(e^{\mu_p s_k}v(\cdot, s_k))_{k\in \N}$ has to converge to some $p$-ground state. This in turn will allow us to prove the convergence of the $p$-Rayleigh quotient of $v(\cdot,t)$ to the optimal value $\lambda_p$. Then we will use the convergence of the $p$-Rayleigh quotient of $v(\cdot,t)$ and the sign of this $p$-ground state to derive a crucial lower bound on $L^p(\Omega)$ norm of
the same sign of $e^{\mu_p s_k}v(\cdot, s_k)$. Finally, we use this estimate to show that in fact the full sequence converges to this $p$-ground state.

1. The following limit
\begin{equation}\label{importantLimit}
S:=\lim_{\tau\rightarrow \infty}\int_\Omega |D\left(e^{\mu_p \tau}v(x, \tau)\right)|^pdx
\end{equation}
exists by the monotonicity formula \eqref{ExpDecay}. If $S=0$, we conclude. So let us assume $S>0$, and suppose $(s_k)_{k\in \N}$ is a sequence of positive numbers increasing to $+\infty$. For each $k\in \N$, define
$$
v^k(x,t):=e^{\mu_p s_k}v(x,t+s_k)
$$
for $x\in \Omega$ and $t\ge 0$.

\par Observe, that $v^k$ is a weak solution with $v^k(\cdot,0)=e^{\mu_p s_k}v(\cdot, s_k)$. By \eqref{importantLimit},  $(v^k(\cdot,0))_{k\in \N}\subset W^{1,p}_0(\Omega)$ is a bounded sequence.  By Theorem 2.8,  there is a subsequence $(v^{k_j})_{j\in \N}$ and weak solution $w$ for which $v^{k_j}$ converges to $w$ in $C([0,T]; L^p(\Omega))\cap L^p([0,T], W^{1,p}_0(\Omega))$ for all $T>0$; moreover, $v^{k_j}(\cdot,t)$ converges to $w(\cdot,t)$ 
weakly in $W^{1,p}_0(\Omega)$ for all $t\ge 0$ and strongly for Lebesgue almost every $t\ge 0$. 
\par By \eqref{importantLimit}, we have 
\begin{align*}
S&=\lim_{k\rightarrow\infty}\int_\Omega |D\left(e^{\mu_p (t+s_{k_j})}v(x, t+s_{k_j})\right)|^pdx \\
&=e^{p\mu_p t}\lim_{j\rightarrow\infty}\int_\Omega |Dv^{k_j}(x, t)|^pdx \\
&=e^{p\mu_p t}\int_\Omega |Dw(x, t)|^pdx
\end{align*}
for almost every $t\ge 0$. However, as $t\mapsto \int_\Omega |Dw(x, t)|^pdx$ is absolutely continuous (by Lemma \ref{AbsPhi}), this equality holds for every $t\ge 0$.   Moreover, it must be that $\lim_{j\rightarrow\infty}\int_\Omega |Dv^{k_j}(x, t)|^pdx = \int_\Omega |Dw(x, t)|^pdx$ also holds for each $t\ge 0$.

\par 2. In addition, we have 
\begin{align}\label{yesGroundState}
0&=\frac{d}{dt}e^{p\mu_p t}\int_\Omega |Dw(x, t)|^pdx \nonumber \\
&=pe^{p\mu_p t}\left\{\mu_p\int_\Omega |Dw(x, t)|^pdx- \int_\Omega |w_t(x, t)|^pdx\right\}
\end{align}
for almost every $t\ge 0$. This computation follows from Lemma \ref{AbsPhi}. By the proof of Lemma \ref{LemmaINeq}, $\mu_p\int_\Omega |Dw(x, t)|^pdx\le \int_\Omega |w_t(x, t)|^pdx$  for almost every $t\ge 0$ and equality holds only if $w(\cdot,t)$ is a $p$-ground state for almost every $t\ge 0$.  Since $t\mapsto w(\cdot,t)\in L^p(\Omega)$ is continuous and $S=\int_\Omega |D\left(e^{\mu_p t}w(x, t)\right)|^pdx$, there is a single $p$-ground state $\psi$ for which 
$$
w(\cdot,t)=e^{-\mu_p t}\psi.
$$
\par In summary,
\begin{equation}\label{SScaledLimit}
\lim_{j\rightarrow \infty}e^{\mu_p (t+s_{k_j})}v(\cdot, t+s_{k_j})=\psi
\end{equation}
in $W^{1,p}_0(\Omega)$ for each $t\ge 0$ and in $L^p(\Omega)$ uniformly for each interval $0\le t\le T$.  Moreover, 
$$
\lim_{t\rightarrow\infty}\frac{\int_\Omega|Dv(x,t)|^pdx}{\int_\Omega|v(x,t)|^pdx}=\lim_{j\rightarrow\infty}\frac{\int_\Omega|D(e^{\mu_ps_{k_j}}v(x,s_{k_j}))|^pdx}{\int_\Omega|e^{\mu_ps_{k_j}}v(x,s_{k_j})|^pdx}=
\frac{\int_\Omega|D\psi|^pdx}{\int_\Omega|\psi|^pdx}=\lambda_p.
$$

\par 3. As $S=\int_\Omega|D\psi|^pdx>0$, the $p$-ground state $\psi$ is determined by its sign. Let us first assume $\psi$ is positive and choose $\delta=\delta(\psi, C, (s_{k_j})_{j\in \N})$ as in Lemma 
\ref{CrucialLem} where 
$$
C:=\frac{1}{\lambda_p}\int_{\Omega}|Dv(x,0)|^pdx. 
$$
Note by Poincar\'{e}'s inequality \eqref{PoincareIneq} and Lemma \ref{LemmaINeq}
\begin{equation}\label{AlwaysLessThanC0}
\int_\Omega |e^{\mu_p t}v(x, t)|^pdx\le \frac{1}{\lambda_p}\int_\Omega |D\left(e^{\mu_p t}v(x, t)\right)|^pdx\le C
\end{equation}
for all $t\ge 0$. 

\par Now fix $j_0\in \N$ so large that 
$$
 \int_\Omega |(v^{k_{j}})^+(x,0)|^pdx\geq\frac{1}{2}\int_\Omega |\psi|^pdx \quad \text{and}\quad \frac{\int_\Omega |Dv^{k_{j}}(x,0)|^pdx}{\int_\Omega |v^{k_{j}}(x,0)|^pdx}\leq \lambda_p+\delta
$$
for $j\ge j_0$.  Let us additionally fix an $\ell\ge j_0$.  By \eqref{AlwaysLessThanC0}
$$
\int_\Omega |v^{k_{\ell }}(x, 0)|^pdx=\int_\Omega |e^{\mu_p s_{k_{\ell }}}v(x, s_{k_{\ell }})|^pdx\le C,
$$
and by \eqref{SScaledLimit}, 
$$
\lim_{j\rightarrow\infty}e^{\mu_p s_{k_j}}v^{k_{\ell }}(\cdot,s_{k_j})=\lim_{j\rightarrow\infty}e^{\mu_p(s_{k_j}+s_{k_{\ell }})}v(\cdot,s_{k_j}+s_{k_{\ell }})=\psi
$$
in $W^{1,p}_0(\Omega)$.   Lemma \ref{CrucialLem} then implies 
\begin{equation}\label{NeedthisatT1}
\int_\Omega |e^{\mu_p t} (v^{k_{\ell }})^+(x,t)|^pdx\geq \frac{1}{2}\int_\Omega |\psi|^pdx 
\end{equation}
or equivalently, 
\begin{equation}\label{GotAtT1}
\int_\Omega \left|e^{\mu_p \left(t + s_{k_{\ell }}\right)}v^+(x,t+s_{k_{\ell }})\right|^pdx\geq \frac{1}{2}\int_\Omega |\psi|^pdx .
\end{equation}
for $0\le t\le 1$. 

\par Now set 
$$
u(x,t):=e^{\mu_p}v^{k_{\ell }}(x,t+1)=e^{\mu_p\left(1+s_{k_{\ell }}\right)}v(x,t+1+s_{k_{\ell }})
$$
for $x\in \Omega$ and $t\ge 0.$ Let us verify the hypotheses of Lemma \ref{CrucialLem}. 

\begin{enumerate}[$(i)$]

\item In view of \eqref{SScaledLimit}, 
$$
\lim_{j\rightarrow\infty}e^{\mu_p s_{k_j}}u(\cdot,s_{k_j})=\lim_{j\rightarrow\infty}e^{\mu_p\left(1+s_{k_{\ell }}+s_{k_j}\right)}v(\cdot,1+s_{k_{\ell }}+s_{k_j} )=\psi
$$
in $W^{1,p}_0(\Omega)$. 

\item By \eqref{AlwaysLessThanC0},
$$
\int_\Omega |u(x, 0)|^pdx=\int_\Omega \left|e^{\mu_p\left(1+s_{k_{\ell }}\right)}v(x,1+s_{k_{\ell }})\right|^pdx\le C.
$$

\item By Proposition \ref{RayleighGoDown},

\begin{align*}
\frac{\int_\Omega|Du(\cdot,0)|^pdx}{\int_\Omega|u(\cdot,0)|^pdx} &= \frac{\int_\Omega\left|Dv(x,1+s_{k_{\ell }})\right|^pdx}{\int_\Omega\left|v(x,1+s_{k_{\ell }})\right|^pdx} \\
&\le  \frac{\int_\Omega\left|Dv(x,s_{k_{\ell }})\right|^pdx}{\int_\Omega\left|v(x, s_{k_{\ell }})\right|^pdx}\\
&=\frac{\int_\Omega\left|Dv^{k_{\ell }}(x,0)\right|^pdx}{\int_\Omega\left|v^{k_{\ell }}(x,0)\right|^pdx}\\
&\le \lambda_p +\delta.
\end{align*}

\item Evaluating \eqref{NeedthisatT1} at $t=1$ gives
$$
\int_\Omega |u^+(x,0)|^pdx=\int_\Omega |e^{\mu_p } (v^{k_{\ell }})^+(x,1)|^pdx\geq \frac{1}{2}\int_\Omega |\psi|^pdx.
$$
\end{enumerate}
Then Lemma \ref{CrucialLem} implies 
$$
\int_\Omega |e^{\mu_p t} u^+(x,t)|^pdx= 
\int_\Omega |e^{\mu_p(t+1)}\left(v^{k_{\ell }}\right)^+(x,t+1)|^pdx\geq \frac{1}{2}\int_\Omega |\psi|^pdx .
$$
for $0\le t\le 1$. Combining with \eqref{GotAtT1} we have that in fact \eqref{GotAtT1} holds for $0\le t\le 2$, and by induction, it holds for all $t\ge 0.$   Therefore, 
\begin{equation}\label{GeneralvPlusBound}
\int_\Omega \left|e^{\mu_p \left(t + s_{k_{j}}\right)}v^+(x,t+s_{k_{j}})\right|^pdx\geq \frac{1}{2}\int_\Omega |\psi|^pdx, \quad t\in[0,\infty)
\end{equation}
for $j\ge j_0$. Finally, if $\psi<0$, inequality \eqref{GeneralvPlusBound} holds with $v^-$ replacing $v^+$. 

\par 4. Now let $(t_\ell)_{\ell\in \N}$ be another sequence of positive numbers increasing to infinity. From our arguments above, $t_\ell$ has a subequence 
(that we won't relabel) such that $e^{\mu_p t_\ell}v(\cdot,t_\ell)$ converges to a $p$-ground state $\varphi$ in $W^{1,p}_0(\Omega)$ as $\ell\rightarrow \infty$. Moreover, 
$\varphi$ also satisfies $S=\int_\Omega|D\varphi|^pdx$.  By the simplicity of $\lambda_p$, $\varphi=\psi$ or $\varphi=-\psi$. Let us assume $\varphi=-\psi$ and without any loss of generality, $\varphi<0$. 
As $t_\ell$ is increasing, we can choose a subsequence $(t_{\ell_j})_{j\in \N}$ for which 
$$
t_{\ell_j}>s_{k_j},\quad \quad j\in \N. 
$$
Substituting $t=t_{\ell_j}-s_{k_j}>0$ in \eqref{GeneralvPlusBound} gives, 
$$
\int_\Omega \left|e^{\mu_p t_{\ell_j}}v^+(x,t_{\ell_j})\right|^pdx\geq \frac{1}{2}\int_\Omega |\psi^+|^pdx, \quad j\in\N.
$$
However, after letting $j\rightarrow\infty$ we find 
$$
\int_\Omega \left|\varphi^+\right|^pdx\geq \frac{1}{2}\int_\Omega |\psi^+|^pdx
$$
which cannot occur since $\varphi<0$ and $\psi>0$ in $\Omega$. 

\par Consequently, for every sequence $(s_k)_{k\in \N}$ of positive numbers increasing to $\infty$, there is a subsequence of $(e^{\mu_p s_k}v(\cdot, s_k))_{k\in \N}$ converging in $W^{1,p}_0(\Omega)$
to a $p$-ground state $\psi$ with the same sign that satisfies $S=\int_\Omega|Dw|^pdx$. We appeal to the simplicity of $\lambda_p$ once again to conclude there is only one such ground state $\psi$. Therefore, $\lim_{t\rightarrow\infty}e^{\mu_pt}v(\cdot, t)=\psi$ 
in $W^{1,p}_0(\Omega)$, as asserted. 
\end{proof}
\begin{rem}
By Morrey's inequality, the family $\{e^{\mu_p t}v(\cdot, t)\}_{t\ge 0}$ is precompact in $C^{0,1-n/p}(\Omega)$ for $p>n$. In this case, $\lim_{t\rightarrow\infty}e^{\mu_pt}v(x, t)=\psi(x)$ uniformly in $x\in \Omega$.  It would be of great interest to establish uniform convergence for all $p>1$.  It seems to us that the lacking 
piece of information is a modulus of continuity estimate on solutions of \eqref{MainPDE}. Indeed, we have not succeeded in deriving any useful a priori estimates on solutions of 
\eqref{MainPDE}.  We hope to do so in forthcoming work. 
\end{rem}

\section{Viscosity solutions}\label{ViscSoln}
We now turn our attention to proving Theorem \ref{ImplicitSchemeThm}. Therefore, we assume throughout this section that $p\ge 2$, $g\in W^{1,p}_0(\Omega)\cap C(\overline{\Omega})$, and that there is a $p$-ground state $\varphi$ for which 
\begin{equation}\label{gPsiIneq}
-\varphi(x)\le g(x)\le \varphi(x),  \quad x\in \overline{\Omega}.
\end{equation}
These assumptions will help us verify that \eqref{pParabolic} has a unique {\it viscosity solution} that is also a weak solution; the reader can find important background material on the theory 
of viscosity solutions from sources such as \cite{BC, CIL, FS}. We remark that we do not consider the ``singular'' case $p\in (1,2)$ in order to avoid technicalities 
and to focus on the new ideas needed to build viscosity solutions of \eqref{pParabolic}.  

\par While establishing the uniqueness of viscosity solutions of the initial value problem \eqref{pParabolic} is far from trivial, a standard proof for the comparison of viscosity solutions $(p=2)$ of the heat equation is 
readily adapted to \eqref{pParabolic}. For instance, it is possible to modify the proofs of Theorem 8.2 of \cite{CIL}, Theorem 8.1 of section V.8 in \cite{FS}, or Theorem 4.7 of \cite{JLM2}  to prove the following proposition.
The main feature to be exploited is that the term $|v_t|^{p-2}v_t$ is strictly increasing in the time derivative $v_t$. 
\begin{prop}\label{UsualComparison}
Assume $v\in USC(\overline{\Omega}\times[0,T))$ and $w\in LSC(\overline{\Omega}\times[0,T))$. Suppose the inequality
$$
|v_t|^{p-2}v_t-\Delta_pv\le 0\le |w_t|^{p-2}w_t-\Delta_pw, \quad \Omega\times(0,T)
$$
holds in the sense of viscosity solutions and $v(x,t)\le w(x,t)$ for $(x,t)\in \partial\Omega\times[0,T)$ and for $(x,t)\in \Omega\times\{0\}$. Then 
$$
v\le w
$$
in $\Omega\times(0,T).$
\end{prop}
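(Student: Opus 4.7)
The plan is to adapt the classical doubling-of-variables comparison technique for parabolic equations, exploiting precisely the feature the authors flag: strict monotonicity of $\mathcal{J}_p(r)=|r|^{p-2}r$ in its scalar argument. Suppose for contradiction that $\sup_{\overline{\Omega}\times[0,T)}(v-w)>0$. The first step is to perturb the subsolution to $v_\varepsilon(x,t):=v(x,t)-\varepsilon/(T-t)$. Since $\mathcal{J}_p$ is increasing, $v_\varepsilon$ remains a viscosity subsolution, and whenever $\phi$ touches $v_\varepsilon$ from above at $(x_0,t_0)$, the subsolution inequality for $v$ reads
$$\mathcal{J}_p\bigl(\phi_t(x_0,t_0)+\varepsilon/(T-t_0)^2\bigr)\leq\Delta_p\phi(x_0,t_0),$$
which will supply the strict slack needed later. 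For $\varepsilon$ small, $\sup(v_\varepsilon-w)$ is still positive, and since $v_\varepsilon\to-\infty$ as $t\to T^-$, any maximizer is confined to a compact subset of $\overline{\Omega}\times[0,T)$; the boundary and initial hypotheses then force any interior maximizer to lie in $\Omega\times(0,T)$.

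Next I would introduce
$$\Phi_{\alpha,\beta}(x,y,t,s):=v_\varepsilon(x,t)-w(y,s)-\frac{\alpha}{q}|x-y|^q-\frac{\beta}{2}(t-s)^2$$
for large $\alpha,\beta>0$ and an exponent $q>p/(p-1)$, chosen so that the spatial penalty is $C^2$ even at $x=y$ and compatible with $\Delta_p$. Let $(\hat x,\hat y,\hat t,\hat s)$ be a maximizer; standard penalization estimates yield $|\hat x-\hat y|\to 0$, $(\hat t-\hat s)\to 0$, and that the penalty terms vanish as $\alpha,\beta\to\infty$. Applying the parabolic Crandall--Ishii theorem of sums produces a common time slope $\tau:=\beta(\hat t-\hat s)$, a common momentum $\eta:=\alpha|\hat x-\hat y|^{q-2}(\hat x-\hat y)$, and symmetric matrices $X\leq Y$ such that
$$\mathcal{J}_p\bigl(\tau+\varepsilon/(T-\hat t)^2\bigr)\leq F(\eta,X),\qquad \mathcal{J}_p(\tau)\geq F(\eta,Y),$$
where $F(\eta,M):=|\eta|^{p-2}\mathrm{tr}(M)+(p-2)|\eta|^{p-4}\langle M\eta,\eta\rangle$ is $\Delta_p$ written in non-divergence form. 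Subtracting and using $X\leq Y$ along with degenerate ellipticity of $F(\eta,\cdot)$ for $\eta\neq 0$ gives
$$0<\mathcal{J}_p\bigl(\tau+\varepsilon/(T-\hat t)^2\bigr)-\mathcal{J}_p(\tau)\leq F(\eta,X)-F(\eta,Y)\leq 0,$$
the strict left-hand inequality coming from strict monotonicity of $\mathcal{J}_p$. This contradiction establishes $v\leq w$ on $\Omega\times(0,T)$.

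The principal technical obstacle is the degeneracy and singularity of $\Delta_p$ at zero gradient, namely the case $\hat x=\hat y$ (so $\eta=0$) that can arise in the Crandall--Ishii step. To handle it I would adopt the viscosity-solution framework for the $p$-Laplacian developed by Juutinen--Lindqvist--Manfredi, in which test functions with vanishing gradient are admissible only when their Hessians satisfy a compatibility restriction; for $p\geq 2$ this is consistent with the parabolic Ishii lemma and the above argument goes through at such points. Apart from this well-understood wrinkle, the comparison argument is essentially automatic: because $\mathcal{J}_p(v_t)$ depends only on the scalar $v_t$ and is strictly increasing, the Ishii lemma automatically delivers the \emph{same} time slope on both sides, and the usual source of difficulty---coupling between time- and space-dependence in the parabolic comparison---does not appear.
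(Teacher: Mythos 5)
The paper does not actually supply a proof of this proposition: it refers to Theorem 8.2 of Crandall--Ishii--Lions, Theorem 8.1 of Section V.8 in Fleming--Soner, and Theorem 4.7 of Juutinen--Lindqvist--Manfredi, and singles out the strict monotonicity of ${\cal J}_p(r)=|r|^{p-2}r$ in $r$ as the feature to exploit. Your sketch implements precisely that program---the time barrier $-\varepsilon/(T-t)$, doubling of variables, the parabolic theorem of sums delivering a common time slope, and the contradiction from strict monotonicity of ${\cal J}_p$---so it matches the approach the paper indicates; the only superfluous element is the appeal to the Juutinen--Lindqvist--Manfredi restricted test-function convention, which is aimed at the singular range $1<p<2$ and is not needed here since the proposition assumes $p\ge 2$, where the nondivergence form of $\Delta_p$ is continuous in the gradient variable and the case $\eta=0$ is handled directly.
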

 Consequently, we will 
concentrate on confirming the existence of a viscosity solution and showing that this solution is indeed a weak solution.  Fortunately, we propose a method that resolves both issues simultaneously.  Let us first begin by observing that solutions of the implicit time scheme \eqref{ViscScheme} generate viscosity solutions. 

\begin{lem}\label{SchemeVISC} For each $\tau>0$,  the implicit scheme \eqref{ViscScheme} generates a solution sequence $\{v^k\}$
of viscosity solutions. Moreover,
$$
\sup_{\Omega}|v^k|\le\sup_{\Omega}|g|
$$
and $v^k\in C^{1,\alpha}_{\text{loc}}(\Omega)$ for some $\alpha\in (0,1]$ and each $k\in \N$.
\end{lem}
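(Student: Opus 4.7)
The plan is to proceed by induction on $k$, starting from $v^0 := g$. Given $v^{k-1}$, I would define $v^k$ as the unique minimizer in $W^{1,p}_0(\Omega)$ of the strictly convex, coercive functional
$$
F(w) := \int_\Omega \frac{|Dw|^p}{p}\,dx + \frac{1}{p\tau^{p-1}}\int_\Omega |w - v^{k-1}|^p\,dx.
$$
Its Euler--Lagrange equation is precisely the weak form of \eqref{ViscScheme}, which furnishes the desired weak solution sequence.

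To establish the $L^\infty$ bound, I argue inductively: setting $M := \|g\|_{L^\infty(\Omega)}$ and assuming $\|v^{k-1}\|_\infty \le M$, the truncation $(v^k - M)_+$ lies in $W^{1,p}_0(\Omega)$ and is admissible as a test function. On the set $\{v^k > M\}$ one has $v^k > M \ge v^{k-1}$, so the zero-order term is strictly positive there, and testing the weak formulation yields
$$
\int_{\{v^k > M\}}|Dv^k|^p\,dx + \tau^{1-p}\int_{\{v^k > M\}}|v^k - v^{k-1}|^{p-2}(v^k - v^{k-1})(v^k - M)\,dx = 0,
$$
whose integrands are both nonnegative, forcing $|\{v^k > M\}| = 0$. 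A symmetric argument with the truncation from below gives $v^k \ge -M$. The bound $\|v^k\|_\infty \le M$ renders the right-hand side of $\Delta_p v^k = \mathcal{J}_p((v^k - v^{k-1})/\tau)$ uniformly bounded, so the local $C^{1,\alpha_k}$ regularity of $v^k$ follows from the classical results of Tolksdorf, DiBenedetto, and Lewis on $p$-Laplace equations with bounded source.

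The last step, that each $v^k$ is a viscosity solution of \eqref{ViscScheme}, is where the real work lies. Since $v^k$ is continuous from the $C^{1,\alpha_k}$ regularity and $v^{k-1}$ is continuous by induction (the base case supplied by $g \in C(\overline{\Omega})$), the zero-order term $f_k := \mathcal{J}_p((v^k - v^{k-1})/\tau)$ is continuous on $\Omega$. The claim then reduces to the equivalence between continuous weak and viscosity solutions of the quasilinear equation $-\Delta_p u + f_k = 0$ for $p \ge 2$, which is a direct adaptation of the argument of Juutinen, Lindqvist, and Manfredi for the pure $p$-Laplacian: if a smooth $\phi$ touches $v^k$ from above at some $x_0$ but $-\Delta_p\phi(x_0) + f_k(x_0) > 0$, continuity propagates the strict inequality to a small ball $B$, and one derives a contradiction by testing the weak formulation against a suitable truncation built from $\phi - v^k$ on $B$; the supersolution property is symmetric. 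Adapting this standard touching argument to our quasilinear equation with a continuous zero-order term is the main technical hurdle, but the required continuity of $v^k$ and $f_k$ is already in hand from the preceding regularity step.
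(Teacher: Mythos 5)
Your proposal is correct and follows essentially the same route as the paper: establish existence and an $L^\infty$ bound, invoke DiBenedetto's $C^{1,\alpha}_{\mathrm{loc}}$ regularity once the right-hand side is seen to be bounded, and then adapt the Juutinen--Lindqvist--Manfredi comparison argument to show each $v^k$ is a viscosity solution of the elliptic equation $-\Delta_p u + \mathcal{J}_p((u - v^{k-1})/\tau) = 0$. The only cosmetic difference is in the $L^\infty$ bound: you test the weak form against the truncation $(v^k - M)_+$, while the paper deduces the same bound by noting the constant $\sup_\Omega|g|$ is a weak supersolution (because $\mathcal{J}_p$ is increasing) and invoking weak comparison; these are equivalent, as proving that comparison principle amounts precisely to the truncation computation you carry out.
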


\begin{proof}
Consider the implicit scheme \eqref{ViscScheme} for $k=1$
\begin{equation}\label{ViscSchemeOne} 
{\cal J}_p\left(\frac{v^1 - g}{\tau}\right)=\Delta_pv^1,\quad x\in \Omega.
\end{equation}
As ${\cal J}_p$ is increasing, this PDE admits a comparison principle for weak sub- and supersolutions.  Since the constant 
function $\sup_{\Omega}|g|$ is a supersolution, that is nonnegative on $\partial \Omega$, $v^1\le \sup_{\Omega}|g|$. Likewise, 
$v^1\ge -\sup_{\Omega}|g|$, and thus $|v^1|\le \sup_{\Omega}|g|$. As the left hand side of the PDE \eqref{ViscSchemeOne} is now identified as an $L^\infty(\Omega)$ function, Theorem 2 in \cite{DB} implies there is some $\alpha\in(0,1]$ such that $v^1\in C^{1,\alpha}_{\text{loc}}(\Omega)$.  The assertion for each $v^k$ follows routinely by induction.

\par Let us now verify that $v^1$, and similarly each $v^k$, is a viscosity solution. We will closely follow the argument used to prove Theorem 2.5 in \cite{JLM2}.  Assume that 
$v^1-\phi$ has a strict local minimum at $x_0$ where $\phi\in C^\infty(\Omega)$. We are to show 
\begin{equation}\label{visc4v1}
{\cal J}_p\left(\frac{v^1(x_0) - g(x_0)}{\tau}\right)\ge \Delta_p\phi(x_0).
\end{equation}
If \eqref{visc4v1} doesn't hold, there is a $\delta>0$ where
$$
\begin{cases}
{\cal J}_p\left(\frac{v^1(x) - g(x)}{\tau}\right)< \Delta_p\phi(x),\\
(v^1-\phi)(x)>(v^1-\phi)(x_0)
\end{cases}
$$
for $x\in B_\delta(x_0).$ Set 
$$
c:=\inf_{\partial B_\delta(x_0)}(v^1-\phi)>(v^1-\phi)(x_0),
$$
and observe
$$
-\Delta_p(\phi+c)<-{\cal J}_p\left(\frac{v^1 - g}{\tau}\right)=-\Delta_pv^1, \quad  x\in B_\delta(x_0)\\
$$
while $\phi+c\le v^1$ for $x\in \partial B_\delta(x_0)$. By comparison, $\phi+c\le v^1$ in $\overline{B}_\delta(x_0)$. In particular, 
$$
c\le (v^1-\phi)(x_0)
$$
which is a contradiction. Hence, \eqref{visc4v1} holds and the argument for the subsolution property of $v^1$ can be made similarly. 
\end{proof}

\begin{cor}\label{LemdiscreteVisc}
Let $\tau>0$.  Assume $\{\psi^k\}^\infty_{k=0}\subset C^\infty(\Omega)$ and $(x_0,k_0)\in\Omega\times\N$ is such that
\begin{equation}\label{discreteVisc}
v^k(x)-\psi^k(x)\le v^{k_0}(x_0)-\psi^{k_0}(x_0) 
\end{equation}
for $x$ in a neighborhood of $x_0$ and $k\in \{k_0-1,k_0\}$. Then 
$$
{\cal J}_p\left(\frac{\psi^{k_0}(x_0) - \psi^{k_0-1}(x_0)}{\tau}\right)\le \Delta_p\psi^{k_0}(x_0).
$$
\end{cor}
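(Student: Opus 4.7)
The plan is to read off the conclusion from the viscosity subsolution property of $v^{k_0}$, combined with a pointwise comparison coming from the hypothesis at index $k_0-1$ and the monotonicity of $\mathcal{J}_p$. By Lemma \ref{SchemeVISC}, $v^{k_0}$ is a viscosity solution in $\Omega$ of
\[
\mathcal{J}_p\!\left(\frac{v^{k_0}(x) - v^{k_0-1}(x)}{\tau}\right) = \Delta_p v^{k_0}(x),
\]
where $v^{k_0-1}\in C^{1,\alpha}_{\text{loc}}(\Omega)$ plays the role of a continuous datum. The hypothesis \eqref{discreteVisc} at $k=k_0$ says exactly that $\psi^{k_0}$ touches $v^{k_0}$ from above at $x_0$; after the standard trick of replacing $\psi^{k_0}$ by $\psi^{k_0}(x)+|x-x_0|^{4}$ to upgrade this to a strict local maximum (which leaves $\psi^{k_0}(x_0)$, $D\psi^{k_0}(x_0)$, and $D^2\psi^{k_0}(x_0)$ unchanged, hence also $\Delta_p\psi^{k_0}(x_0)$), the viscosity subsolution inequality yields
\[
\mathcal{J}_p\!\left(\frac{v^{k_0}(x_0) - v^{k_0-1}(x_0)}{\tau}\right) \le \Delta_p \psi^{k_0}(x_0).
\]

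To finish, I would evaluate the hypothesis \eqref{discreteVisc} at $k=k_0-1$ and at the specific point $x=x_0$. This reads
\[
v^{k_0-1}(x_0)-\psi^{k_0-1}(x_0)\le v^{k_0}(x_0)-\psi^{k_0}(x_0),
\]
which rearranges to $\psi^{k_0}(x_0)-\psi^{k_0-1}(x_0)\le v^{k_0}(x_0)-v^{k_0-1}(x_0)$. Dividing by $\tau>0$ and applying the monotone increasing function $\mathcal{J}_p$ preserves the inequality, so chaining with the subsolution inequality produces
\[
\mathcal{J}_p\!\left(\frac{\psi^{k_0}(x_0) - \psi^{k_0-1}(x_0)}{\tau}\right) \le \mathcal{J}_p\!\left(\frac{v^{k_0}(x_0) - v^{k_0-1}(x_0)}{\tau}\right) \le \Delta_p \psi^{k_0}(x_0),
\]
which is the desired conclusion.

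No serious obstacle is anticipated: the content is really just the observation that one only needs the ``spatial'' test-function property of $v^{k_0}$ at $x_0$, while the ``temporal'' comparison enters purely pointwise through monotonicity of $\mathcal{J}_p$. The only technicality is the perturbation that promotes the (non-strict) local maximum to a strict one, which is routine in viscosity theory and entirely harmless here since $p\ge 2$ guarantees that $\Delta_p$ evaluated on a smooth function is classically well defined, even at critical points of that function.
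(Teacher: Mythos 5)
Your proof is correct and matches the paper's argument essentially step for step: use the viscosity subsolution property of $v^{k_0}$ with $\psi^{k_0}$ as test function touching from above, then read off the pointwise inequality $\psi^{k_0}(x_0)-\psi^{k_0-1}(x_0)\le v^{k_0}(x_0)-v^{k_0-1}(x_0)$ from \eqref{discreteVisc} at $k=k_0-1$, and chain via the monotonicity of $\mathcal{J}_p$. The only difference is that you spell out the routine perturbation promoting the non-strict maximum to a strict one, which the paper leaves implicit.
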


\begin{proof}
Evaluating the left hand side \eqref{discreteVisc} at $k=k_0$ gives 
$$
{\cal J}_p\left(\frac{v^{k_0}(x_0) - v^{k_0-1}(x_0)}{\tau}\right)\le \Delta_p\psi^{k_0}(x_0)
$$
as $v^{k_0}$ is a viscosity solution of \eqref{ViscScheme}.  Evaluating the left hand side of \eqref{discreteVisc} at $x=x_0$ and $k=k_0-1$ gives
$\psi^{k_0}(x_0) - \psi^{k_0-1}(x_0)\le v^{k_0}(x_0) - v^{k_0-1}(x_0)$. The claim follows from the above inequality and the monotonicity of 
${\cal J}_p$. 
\end{proof} 
Our candidate for a viscosity solution of \eqref{pParabolic} is $\lim_{N\rightarrow\infty} v_N$ where $v_N$ is defined in \eqref{CLaan}.  We have 
already established that a subsequence of $(v_N)_{N\in \N}$ converges to a weak solution in $C([0,T]; L^p(\Omega))$. Therefore, we are left to verify that this 
sequence converges uniformly to a viscosity solution. Towards this goal, we will employ the half-relaxed limits of $v_N$ 
$$
\overline{v}(x,t):=\limsup_{\substack{N\rightarrow\infty\\ (y,s)\rightarrow(x,t)}}v_N(y,s)
$$
$$
\underline{v}(x,t):=\liminf_{\substack{N\rightarrow\infty\\ (v,s)\rightarrow(x,t)}}v_N(y,s)
$$
for $x\in \overline\Omega$ and $t\in [0,T]$.
\par By Lemma \ref{SchemeVISC}, the sequence $\{v_N\}_{N\in \N}$ is bounded, independently of $N\in \N$.  As a result, the above functions are well defined and finite at each 
$(x,t)\in \overline\Omega\times [0,T]$. Moreover, $\overline{v}, -\underline{v}$ are upper semicontinuous and $\overline{v}=\underline{v}$ 
if and only if $v_N$ converges locally uniformly (see Remark 6.4 of \cite{CIL}).  It is immediate that $\underline v\le \overline v$. In order to conclude $\overline v\le \underline v$, we will show that
$\underline{v}(x,t)=\overline{v}(x,t)$ when $t=0$ and when $x\in \partial \Omega$ and that $\overline v$ and $\underline v$ are respective viscosity sub- and supersolutions of the PDE \eqref{MainPDE}. We would then be in a position to apply Proposition \ref{UsualComparison}.

\begin{lem}\label{BoundaryBarrier}
Let $\varphi$ be the $p$-ground state in \eqref{gPsiIneq}. Then for $N\in\N$
\begin{equation}\label{BoundaryEstimate}
-\varphi(x)\le v_N(x,t)\leq \varphi(x),\quad (x,t)\in \overline\Omega\times[0,T].
\end{equation}
In particular, for $x_0\in\partial \Omega$, $\overline v(x_0,t)=\underline v(x_0,t)=0$.
\end{lem}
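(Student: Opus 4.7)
The plan is to establish the two-sided bound \eqref{BoundaryEstimate} at the level of the discrete iterates $v^k = v^k_{T/N}$ produced by the implicit scheme \eqref{ViscScheme}, and then read off the statement for $v_N$ from its definition \eqref{CLaan} (with the case $t=0$ given by $v_N(\cdot,0)=g$, for which \eqref{gPsiIneq} applies directly). I would proceed by induction on $k$, with base case $k=0$ handled by \eqref{gPsiIneq}. Note at the outset that \eqref{gPsiIneq} already forces $\varphi \ge 0$ in $\overline\Omega$, so \eqref{GroundStatePDE} gives $-\Delta_p \varphi = \lambda_p |\varphi|^{p-2}\varphi \ge 0$ in $\Omega$.

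For the inductive step, assuming $-\varphi \le v^{k-1} \le \varphi$, I would show that $\varphi$ is a weak supersolution of the elliptic equation
\begin{equation*}
{\cal J}_p\!\left(\frac{w - v^{k-1}}{\tau}\right) - \Delta_p w = 0 \quad \text{in } \Omega, \qquad w = 0 \text{ on } \partial\Omega,
\end{equation*}
of which $v^k$ is the weak solution (produced by the standard variational scheme). Since ${\cal J}_p$ is increasing and $v^{k-1} \le \varphi$, the zeroth-order term evaluated at $\varphi$ is pointwise nonnegative, while $-\Delta_p \varphi \ge 0$ as noted, which confirms the supersolution property; the boundary condition holds since $\varphi \in W^{1,p}_0(\Omega)$. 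To conclude, I would subtract the weak formulations for $\varphi$ and $v^k$ and test with $\phi = (v^k - \varphi)^+ \in W^{1,p}_0(\Omega)$: on the support of $\phi$, the zeroth-order contribution is nonnegative by monotonicity of ${\cal J}_p$, and the principal-part contribution is nonnegative by the classical inequality for $|\xi|^{p-2}\xi$. Both must therefore vanish, and the strict monotonicity of ${\cal J}_p$ forces $\{v^k > \varphi\}$ to have measure zero, hence $v^k \le \varphi$. A symmetric argument, with $-\varphi$ playing the role of a weak subsolution of the same equation and test function $(-\varphi - v^k)^+$, yields $-\varphi \le v^k$. This closes the induction and establishes the first claim.

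For the boundary assertion, fix $x_0 \in \partial\Omega$. The bound $|v_N(y,s)| \le \varphi(y)$ holds for every $(y,s) \in \overline\Omega \times [0,T]$ and every $N \in \N$; combined with the continuity of $\varphi$ on $\overline\Omega$ and $\varphi(x_0) = 0$, this implies $v_N(y,s) \to 0$ as $y \to x_0$ uniformly in $N$ and $s$. Consequently both half-relaxed limits satisfy $\overline v(x_0,t) = \underline v(x_0,t) = 0$ for every $t \in [0,T]$. I do not foresee a genuine obstacle; the only step requiring a modicum of care is the weak comparison principle invoked in the induction, and this is standard precisely because ${\cal J}_p$ is strictly increasing.
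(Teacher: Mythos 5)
Your proof is correct and follows essentially the same route as the paper: check that $\varphi$ (and $-\varphi$) serves as a super- (sub-)solution of each step of the implicit scheme using $\varphi\ge 0$, $-\Delta_p\varphi\ge 0$, and the monotonicity of $\mathcal J_p$; apply weak comparison; induct on $k$; and then use continuity of $\varphi$ on $\overline\Omega$ with $\varphi|_{\partial\Omega}=0$ to pin down the half-relaxed limits on the lateral boundary. The only difference is that you spell out the test-function argument behind the comparison principle, which the paper simply invokes as standard.
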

\begin{proof} Observe that
$$
-\lap_p \varphi+{\cal J}_p\left(\frac{\varphi - g}{\tau}\right)=\lambda_p|\varphi|^{p-2}\varphi+{\cal J}_p\left(\frac{\varphi - g}{\tau}\right)\geq 0
$$
in $\Omega$ as $\varphi\ge 0$ and $\varphi\ge g$.
Therefore, $\varphi$ is a supersolution of \eqref{ViscSchemeOne}. Since $\varphi=v^1=0$ on $\partial \Omega$, weak comparison implies $v^1\leq \varphi.$
Likewise, $v^1\geq - \varphi.$  Iterating these bounds for each $k$ yields $-\varphi\le v^k\leq \varphi$. Consequently, \eqref{BoundaryEstimate} holds. Since 
$\partial\Omega$ is smooth, we have that $\varphi\in C(\overline\Omega)$ \cite{Saka}. Thus, for $x_0\in\partial\Omega$ and $t\in [0,T]$, we can pass to the limit 
in \eqref{BoundaryEstimate} to conclude $\overline v(x_0,t)\le 0\le\underline v(x_0,t)$.
\end{proof}

\begin{lem}\label{zeroBarrier}
For each $x_0\in \Omega$ and $\e>0$, there is a constant $C=C(x_0,\e)$ such that
\begin{equation}\label{BarrierTzero}
|v_N(x,t)-g(x_0)|\le \e+C\left(t+\frac{T}{N}+|x-x_0|^\frac{p}{p-1}\right)
\end{equation}
for $(x,t)\in \overline\Omega\times[0,T]$ and $N\in \N$. In particular, $\overline v(x_0,0)=\underline v(x_0,0)=g(x_0).$
\end{lem}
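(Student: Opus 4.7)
The plan is to produce explicit discrete sub- and super-barriers
$$\psi^k(x):=g(x_0)+\varepsilon+Ak\tau+B|x-x_0|^{p/(p-1)},\qquad \phi^k(x):=g(x_0)-\varepsilon-Ak\tau-B|x-x_0|^{p/(p-1)},$$
with constants $A,B>0$ depending only on $x_0$ and $\varepsilon$, sandwich the scheme solutions $\{v^k\}$ between them by weak comparison, and then translate the estimate to the interpolant $v_N$.

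First I would compute, for $f(x):=|x-x_0|^{p/(p-1)}$, that $Df=\frac{p}{p-1}|x-x_0|^{1/(p-1)}\frac{x-x_0}{|x-x_0|}$ and consequently $\Delta_p f=(p/(p-1))^{p-1}n$ classically on $\R^n\setminus\{x_0\}$; since $p/(p-1)>1$, the function $f$ lies in $C^1$ and $|Df|^{p-2}Df$ is continuous across $x_0$, so the identity also holds weakly. Hence $\Delta_p\psi^k=B^{p-1}(p/(p-1))^{p-1}n$ weakly while $\mathcal J_p((\psi^k-\psi^{k-1})/\tau)=A^{p-1}$, and choosing $A\ge B\tfrac{p}{p-1}n^{1/(p-1)}$ makes $\psi^k$ a weak supersolution of $\mathcal J_p((u-\psi^{k-1})/\tau)=\Delta_p u$. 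Using continuity of $g$, pick $\delta>0$ with $|g(x)-g(x_0)|<\varepsilon$ on $B_\delta(x_0)$ and take $B$ so large that $B\delta^{p/(p-1)}\ge 2\|g\|_\infty$ and $B\,\mathrm{dist}(x_0,\partial\Omega)^{p/(p-1)}\ge |g(x_0)|+\varepsilon$; the first bound forces $\psi^0\ge g$ in $\Omega$ and the second forces $\psi^k\ge 0=v^k$ on $\partial\Omega$ for every $k\ge 0$.

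The key induction step uses monotonicity of $\mathcal J_p$: if $v^{k-1}\le\psi^{k-1}$ in $\Omega$, then $\psi^k-v^{k-1}\ge\psi^k-\psi^{k-1}$, hence
$$\mathcal J_p\!\left(\frac{\psi^k-v^{k-1}}{\tau}\right)\ge \mathcal J_p\!\left(\frac{\psi^k-\psi^{k-1}}{\tau}\right)\ge \Delta_p\psi^k,$$
so $\psi^k$ is a weak supersolution of the very same elliptic equation $\mathcal J_p((u-v^{k-1})/\tau)=\Delta_p u$ that $v^k$ solves. The weak comparison principle invoked in Lemma \ref{SchemeVISC} then yields $v^k\le\psi^k$ in $\Omega$; the parallel argument with $\phi^k$ gives $v^k\ge\phi^k$. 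For $(k-1)T/N<t\le kT/N$ we have $k\tau=kT/N\le t+T/N$, so $v_N(x,t)=v^k(x)$ obeys \eqref{BarrierTzero} with $C:=\max(A,B)$.

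For the last assertion, any sequence $(x_j,t_j,N_j)\to(x_0,0,\infty)$ gives $|v_{N_j}(x_j,t_j)-g(x_0)|\le \varepsilon+o(1)$ by \eqref{BarrierTzero}, so both half-relaxed limits lie in $[g(x_0)-\varepsilon,g(x_0)+\varepsilon]$, and $\varepsilon$ was arbitrary. The main subtlety is not any single estimate but the inductive step: $\psi^k$ and $v^k$ solve elliptic PDEs with different "previous-step data," and the monotonicity of $\mathcal J_p$ is what lets one convert a supersolution of the former into a supersolution of the latter so that weak comparison applies.
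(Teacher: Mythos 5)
Your proposal is correct and follows essentially the same approach as the paper: build barriers of the form $g(x_0)\pm\varepsilon\pm(\text{linear in }k\tau)\pm(\text{const}\cdot|x-x_0|^{p/(p-1)})$, verify they are super/subsolutions of the $k$-th step of the scheme using the monotonicity of $\mathcal{J}_p$ and the explicit $p$-Laplacian of the cone $|x-x_0|^{p/(p-1)}$, and iterate via weak comparison. You spell out the inductive step (converting a supersolution of the $\psi^{k-1}$-data equation into one of the $v^{k-1}$-data equation via monotonicity of $\mathcal{J}_p$ and the inductive bound $v^{k-1}\le\psi^{k-1}$) somewhat more explicitly than the paper, which compresses this into ``iterating this procedure $k$ times,'' but the underlying argument is the same.
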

\begin{proof}
We first prove there is a constant $C=C(x_0,\e)$ for which   
$$
u(x):=g(x_0)+\e+C\left(\tau+c_p|x-x_0|^\frac{p}{p-1}\right), \quad x\in \overline\Omega
$$
lies above $v^1$. Here $c_p$ is selected so that $\lap_p\left( c_p|x-x_0|^\frac{p}{p-1}\right)=1$. Note that since $g$ is continuous on $\overline\Omega$, we can find a $\delta>0$ and $C>0$ so that
$$
|g(x)-g(x_0)|<\e
$$
when $|x-x_0|<\delta$
and
$$
\sup_\Omega |g|\leq C c_p|x-x_0|^\frac{p}{p-1}
$$
when $|x-x_0|\geq \delta$.   Indeed, we may choose 
$$
C=\frac{2\sup_\Omega|g|}{c_p\delta^\frac{p}{p-1}}.
$$

\par By design, $g(x_0)+\e+Cc_p|x-x_0|^\frac{p}{p-1}-g(x)\geq 0$ for all $x\in\Omega$. Therefore,
\begin{align*}
-\lap_p u+{\cal J}_p\left(\frac{u - g}{\tau}\right)&=-C^{p-1}+{\cal J}_p\left(\frac{g(x_0)+\e+Cc_p|x-x_0|^\frac{p}{p-1}-g+C\tau}{\tau}\right)\\
&\geq -C^{p-1}+C^{p-1}\\
&=0. 
\end{align*}
Choosing $C$ even larger if necessary, we may also assume that $u\geq 0$ on $\partial \Omega$.  In this case, weak comparison gives
$$
v^1(x)\leq u(x)=g(x_0)+\e+C\left(\tau+c_p|x-x_0|^\frac{p}{p-1}\right)
$$
$x\in\Omega$. Similarly, we have
$$
v^1(x)\geq g(x_0)-\e-C\left(\tau+c_p|x-x_0|^\frac{p}{p-1}\right).
$$
\par After iterating this procedure $k$ times, we find
$$
g(x_0)-\e-C\left(k\tau+c_p|x-x_0|^\frac{p}{p-1}\right)\leq v^k(x)\leq g(x_0)+\e+C\left(k\tau+c_p|x-x_0|^\frac{p}{p-1}\right).
$$
By the definition of $v_N$ (in which $\tau=T/N$), we obtain for $t\in ((k-1)T/N, kT/N]$
$$
v_N(x,t)=v^k(x)\leq g(x_0)+\e+C\left(t+\frac{T}{N}+c_p|x-x_0|^\frac{p}{p-1}\right).
$$
The analogous lower bound holds as well, which implies \eqref{BarrierTzero}.  As a result
$$
g(x_0)-\e-C\left(t+c_p|x-x_0|^\frac{p}{p-1}\right)\leq \underline v(x,t)\leq \overline v(x,t)\leq g(x_0)+\e+C\left(t+c_p|x-x_0|^\frac{p}{p-1}\right),
$$
and therefore
$$
g(x_0)-\e\leq \underline v(x_0,0)\leq \overline v(x_0,0)\leq g(x_0)+\e.
$$
These inequalities conclude the proof, as $\e>0$ is arbitrary.
\end{proof}

\par The following lemma will allow us to exploit the 
discrete viscosity solutions property of solutions sequences of \eqref{ViscScheme} as described in Corollary \ref{LemdiscreteVisc}. We note this
statement is an analog of Lemma A.3 in \cite{BarPer} and is inspired by other works of G. Barles and B. Perthame such as \cite{BarPer2}.
\begin{lem}\label{ApproxPoints}
Assume $\phi\in C^\infty(\Omega\times(0,T))\cap C(\overline\Omega\times[0,T]) $.  For $N\in \N$ define 
$$
\phi_N(x,t):=
\begin{cases}
\phi(x,0), \quad & (x,t)\in \Omega\times\{0\},\\
\phi(x,\tau_k), \quad & (x,t)\in\Omega \times (\tau_{k-1}, \tau_k]
\end{cases}, \quad k=1,\dots,N.
$$
Suppose $\overline{v}-\phi$ ($\underline{v}-\phi$) has a strict local maximum (minimum) at $(x_0,t_0)\in \Omega\times(0,T)$. Then there are sequences $(x_j,t_j)\rightarrow  (x_0,t_0)$ and $N_j\rightarrow \infty$, as $j\rightarrow \infty$, such that 
$v_{N_j} -\phi_{N_j}$ has local maximum (minimum) at $(x_j,t_j)$ for each $j\in \N$. 
\end{lem}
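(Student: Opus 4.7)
The plan is a standard half-relaxed limits localization argument (in the style of Barles--Perthame), adapted to the piecewise-constant-in-time structure of $v_N$. I describe the case of a strict local maximum of $\overline v - \phi$ at $(x_0, t_0)$; the minimum case for $\underline v$ is symmetric. Since $(x_0, t_0) \in \Omega \times (0, T)$, I choose a closed product neighborhood $Q := \overline B_r(x_0) \times [t_0 - s, t_0 + s] \subset \Omega \times (0, T)$ small enough that
$$
(\overline v - \phi)(x, t) < (\overline v - \phi)(x_0, t_0) =: M_\infty, \quad (x, t) \in Q \setminus \{(x_0, t_0)\}.
$$
Using a product set (rather than a spacetime ball) is convenient because the $x$-slice $\overline B_r(x_0)$ is then independent of $t$, which will handle the lack of upper semicontinuity of $v_N$ in $t$.

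For each $N$, $v_N - \phi_N$ is continuous in $x$ on $Q$ (by Lemma \ref{SchemeVISC} each $v^k$ is $C^{1,\alpha}_{\mathrm{loc}}(\Omega)$) and takes only finitely many values in $t$ (one per piece $(\tau_{k-1}, \tau_k]$ intersecting $[t_0 - s, t_0 + s]$). Hence $v_N - \phi_N$ attains its supremum $M_N$ on $Q$ at some $(x_N, t_N)$: on each time piece one maximizes the continuous function $x \mapsto v^k(x) - \phi(x, \tau_k)$ over the compact set $\overline B_r(x_0)$, and then selects the largest slice. Because $\phi$ is uniformly continuous on $\overline\Omega \times [0, T]$, one also has $\phi_N \to \phi$ uniformly there, as the time spacing $T/N$ tends to zero.

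Now I run the half-relaxed limits argument. The definition of $\overline v(x_0, t_0)$ produces $N_j \to \infty$ and $(y_j, s_j) \to (x_0, t_0)$ with $v_{N_j}(y_j, s_j) \to \overline v(x_0, t_0)$; hence $\liminf_j M_{N_j} \geq \liminf_j \bigl(v_{N_j}(y_j, s_j) - \phi_{N_j}(y_j, s_j)\bigr) = M_\infty$. Conversely, compactness of $Q$ yields a further subsequence (not relabeled) with $(x_{N_j}, t_{N_j}) \to (\tilde x, \tilde t) \in Q$, and the $\limsup$ characterization of $\overline v$ combined with strict maximality gives
$$
M_\infty \le \limsup_j M_{N_j} = \limsup_j \bigl(v_{N_j}(x_{N_j}, t_{N_j}) - \phi_{N_j}(x_{N_j}, t_{N_j})\bigr) \le \overline v(\tilde x, \tilde t) - \phi(\tilde x, \tilde t) \le M_\infty.
$$
Equality throughout forces $(\tilde x, \tilde t) = (x_0, t_0)$, and since every subsequential limit of $(x_{N_j}, t_{N_j})$ equals $(x_0, t_0)$, the whole sequence converges to $(x_0, t_0)$. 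For $j$ large, $(x_{N_j}, t_{N_j})$ then lies in the interior of $Q$, and being a maximum of $v_{N_j} - \phi_{N_j}$ over $Q$ makes it a local maximum in $\Omega \times (0, T)$. The main obstacle throughout is the lack of upper semicontinuity of $v_N$ at the nodes $\tau_k$; the product structure of $Q$ sidesteps this by reducing attainment to finitely many continuous maximizations in the $x$ variable, and the uniform convergence $\phi_N \to \phi$ then lets the half-relaxed limits machinery proceed as in the continuous-time case.
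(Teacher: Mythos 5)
Your proof is correct and follows essentially the same route as the paper's: both are Barles--Perthame half-relaxed-limits localizations that exploit the product structure of the neighborhood to guarantee $v_N - \phi_N$ attains a maximum (finitely many time slices, each a continuous maximization in $x$), and both run the identical compactness/strict-maximality chain to force the maximizers to converge to $(x_0,t_0)$. The only cosmetic difference is that the paper reduces to $\phi\equiv 0$ up front using uniform convergence $\phi_N\to\phi$, whereas you carry $\phi_N$ through the estimates.
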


\begin{proof}
First note that $\phi_N$ converges to $\phi$ uniformly on $\Omega\times [0,T]$. Thus, 
$$
(\overline{v}-\phi)(x,t):=\limsup_{\substack{N\rightarrow\infty\\ (y,s)\rightarrow(x,t)}}(v_N-\phi_N)(y,s).
$$
Consequently, without of loss of generality, we may prove the claim for $\phi\equiv 0$.  Another important observation for us is that for any nonempty, compact subset $D\subset\Omega$ and 
any nonempty, subinterval $I\subset [0,T]$, $v_N$ will achieve a maximum value on $D\times I$.  This follows from the continuity of $v^k$ as
\begin{equation}\label{MaxFormUN}
\sup_{D\times I}v_N=\max\left\{\max_{D}v^k(x): k=1,\dots, N\; \text{such that}\;I\cap (\tau_{k-1},\tau_k]\neq \emptyset\right\}.
\end{equation}

\par Now assume that there is $r>0$ such that  
\begin{equation}\label{StrictMaxubar}
\overline{v}(x,t)<\overline{v}(x_0,t_0), \quad (x,t)\in Q_r,
\end{equation}
where $Q_r:=B_r(x_0)\times (t_0-r,t_0+r)\subset \Omega\times(0,T)$.  By definition, we may select a maximizing sequence  $\overline{v}(x_0,t_0)=\lim_{j\rightarrow \infty}v^{N_j}(y_j,s_j)$ where $(y_j,s_j)\rightarrow (x_0,t_0)$ and $N_j\rightarrow \infty$. Without loss of generality, 
we may assume $(y_j,s_j)\in Q_r$ for all $j\in \N$.  By the equality \eqref{MaxFormUN}, we may assume there is an $(x_j,t_j)\in \overline{Q_r}$ maximizing $v_{N_j}$ over $Q_r$. By compactness,  we may also assume 
that up to a subsequence $(x_j, t_j)\rightarrow (x_1,t_1)\in \overline{Q_r}$ as $j\rightarrow \infty$. 
\par Hence, 
\begin{align*}
\overline{v}(x_0,t_0) & =\limsup_{j\rightarrow \infty}v^{N_j}(y_j,s_j)\\
& \le \limsup_{j\rightarrow \infty}v^{N_j}(x_j,t_j)\\
& \le \overline{v}(x_1,t_1).
\end{align*}
By \eqref{StrictMaxubar}, $(x_1,t_1)=(x_0,t_0)$ and the claim follows. 
\end{proof}

\begin{proof}[Proof of Theorem \ref{ImplicitSchemeThm}] It suffices to show that $\overline{v}$ is a viscosity subsolution 
and $\underline{v}$ is a supersolution of \eqref{MainPDE}. Recall that Lemmas \ref{BoundaryBarrier} and \ref{zeroBarrier} assert that $\overline{v}$ and $\underline v$ agree on $\partial \Omega$ and at $t=0$, which would allow us to apply Proposition \ref{UsualComparison} to conclude 
$\overline v\le \underline v$.  In this case, $\overline v=\underline v$ and $v_N\rightarrow v$ uniformly in $\overline{\Omega}\times[0,T]$.  
 \par  Assume that $\phi\in C^\infty(\Omega\times(0,T))$ and $\overline{v}-\phi$ has a strict local maximum at $(x_0,t_0)\in \Omega\times(0,T)$. By Lemma \ref{ApproxPoints}, there are points 
$(x_j,t_j)$ converging to $(x_0,t_0)$ and $N_j\in \N$ tending to $+\infty$, as $j\rightarrow \infty$, such that $v_{N_j}-\phi_{N_j}$ has a local maximum at $(x_j,t_j)$.  Observe that for each $j\in \N$, 
$t_j\in (\tau_{k_j-1}, \tau_{k_j}]$ for some $k_j\in\{0,1,\dots, N_j\}$. Hence, by the definition of $v_{N_j}$ and $\phi_{N_j}$, 
$$
\Omega\times \{0,1,\dots,N_j\}\ni (x,k)\mapsto v^k(x)- \phi(x,\tau_{k})
$$
has a local maximum at $(x,k)=(x_j, k_j)$.  By Lemma \ref{LemdiscreteVisc}, 
$$
{\cal J}_p\left(\frac{ \phi(x_j,\tau_{k_j})-  \phi(x_j,\tau_{k_j-1})}{T/N_j}\right)\le \Delta_p \phi(x_j,\tau_{k_j}).
$$
\par As $\tau_{k_j-1}=\tau_{k_j}-T/N_j$ and $|t_j-\tau_{k_j}|\le T/N_j$ for $j\in \N$, we can appeal to the smoothness of $\phi$ and send $j\rightarrow \infty$ to arrive at 
$$
{\cal J}_p(\phi_t(x_0,t_0))\le \Delta_p\phi(x_0,t_0).
$$
Consequently, $\underline v$ is a viscosity subsolution of \eqref{MainPDE}.  By the homogeneity of equation \eqref{MainPDE}, the same argument applied to $-\overline{v}$ yields that $\underline{v}$ is a supersolution. 
\end{proof}
We conclude this section by arguing that when $g\in C^2(\overline{\Omega})$, viscosity solutions of \eqref{pParabolic} satisfy
$x\mapsto v(x,t) \in C^{1,\alpha}_\text{loc}(\Omega)$ for almost every  $t>0$ and $|v_t|\le C$.
\begin{prop}\label{GCprop}
Assume $v$ is a viscosity solution of \eqref{pParabolic} and that there is a constant $C\ge 0$ such that 
\begin{equation}\label{Gcond}
|C|^{p-2}C\ge \Delta_pg(x), \quad x\in \Omega.
\end{equation}
Then for each $t\ge s$ and $x\in \Omega$
$$
v(x,t)\le v(x,s) + C(t-s).
$$
In particular $v_t\le C$. Likewise, if $v$ is a viscosity solution of \eqref{pParabolic} and there is $C\le 0$ such that 
$$
|C|^{p-2}C\le \Delta_pg(x), \quad x\in \Omega.
$$
Then $v_t\ge C$.
\end{prop}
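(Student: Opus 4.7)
The plan is to realize $v(x,t) \le v(x,s) + C(t-s)$ as a consequence of Proposition \ref{UsualComparison} applied twice: first to a linear-in-time supersolution built from $g$, and then to a time-translated version of $v$ itself. Throughout, I exploit that equation \eqref{MainPDE} is autonomous in $t$ and invariant under adding a spatial constant, so translated/shifted copies of $v$ remain viscosity solutions.

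\textbf{Step 1: an affine supersolution.} Consider $u(x,t) := g(x) + Ct$ on $\overline{\Omega} \times [0, T)$. Since $u$ is $C^2$ in $x$ and linear in $t$,
$$
|u_t|^{p-2} u_t - \Delta_p u = |C|^{p-2}C - \Delta_p g(x) \ge 0 \quad \text{in } \Omega
$$
by hypothesis \eqref{Gcond}, so $u$ is a classical (hence viscosity) supersolution. At $t = 0$ we have $u(x,0) = g(x) = v(x,0)$, and on $\partial \Omega$ we have $u(x,t) = Ct \ge 0 = v(x,t)$ since $C \ge 0$. Proposition \ref{UsualComparison} then gives $v(x,t) \le g(x) + Ct$ for all $(x,t) \in \Omega \times [0,\infty)$.

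\textbf{Step 2: a shift argument.} Fix $h > 0$ and define
$$
V(x,t) := v(x, t + h), \qquad W(x,t) := v(x,t) + Ch.
$$
Because \eqref{MainPDE} is autonomous in $t$, $V$ is again a viscosity solution; because the equation depends on $v$ only through its derivatives, $W$ is also a viscosity solution. At $t = 0$ one has $V(x,0) = v(x,h) \le g(x) + Ch = W(x,0)$ by Step 1, and on $\partial \Omega$ one has $V \equiv 0 \le Ch \equiv W$ using $C \ge 0$. Proposition \ref{UsualComparison} yields $V \le W$ on $\Omega \times [0,\infty)$, i.e.
$$
v(x, t+h) \le v(x,t) + Ch.
$$
Taking $t = s$ and $h = t - s$ gives the asserted inequality, and therefore $v_t \le C$ wherever the derivative exists.

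\textbf{Step 3: the lower bound.} The equation \eqref{MainPDE} is odd in $v$, so $\tilde v := -v$ is a viscosity solution with initial datum $-g$. The hypothesis $|C|^{p-2}C \le \Delta_p g$ with $C \le 0$ becomes $|-C|^{p-2}(-C) \ge \Delta_p(-g)$ with $-C \ge 0$, which is precisely the hypothesis of the first part applied to $\tilde v$. Hence $\tilde v(x,t) \le \tilde v(x,s) + (-C)(t-s)$, i.e.\ $v(x,t) \ge v(x,s) + C(t-s)$, giving $v_t \ge C$.

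The only delicate point is justifying that $V$ and $W$ in Step 2 inherit the viscosity-solution property; this is essentially tautological from the form of \eqref{MainPDE}, but it is the structural fact that makes the whole argument work without any a priori regularity of $t \mapsto v(\cdot, t)$. Everything else is an application of the already-stated comparison principle.
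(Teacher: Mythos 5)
Your proof is correct and follows essentially the same route as the paper's: Step 1 compares $v$ with the affine supersolution $g(x)+Ct$ via Proposition~\ref{UsualComparison}, Step 2 is the time-shift/spatial-constant-shift comparison ($w_1(x,t)=v(x,t+\tau)$, $w_2(x,t)=v(x,t)+C\tau$), and Step 3 handles the reverse inequality by the oddness of the equation, which is exactly what the paper's ``argue similarly'' refers to.
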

\begin{proof}
By assumption \eqref{Gcond}, $(x,t)\mapsto g(x)+ Ct$ is a supersolution of \eqref{MainPDE} that is at least as large as $v$ on $\partial \Omega$ and when $t=0$. By Proposition \ref{UsualComparison}, 
$v(x,t)\le g(x) +Ct$. Now assume $\tau>0$ is fixed and set $w_1(x,t):=v(x,t+\tau)$ and $w_2(x,t):=v(x,t)+C\tau$. Observe that $w_1$ and $w_2$ are viscosity solutions of \eqref{MainPDE} and $w_1(x,t)\le w_2(x,t)$ when either $(x,t)\in \partial\Omega\times[0,T)$ or when $x\in \Omega$ and $t=0$. By 
Proposition \ref{UsualComparison}, $w_1\le w_2$ and so $v(x,t+\tau)\le v(x,t)+C\tau$. We may argue similarly for the other assertion.
\end{proof}

\begin{cor}
Assume $v$ is a viscosity solution of \eqref{pParabolic} and there is $C\ge 0$ such that 
\begin{equation}\label{Gcond2}
|C|^{p-2}C\ge |\Delta_pg(x)|, \quad x\in \Omega. 
\end{equation}
Then $|v_t|\le C$. 
\end{cor}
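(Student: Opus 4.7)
The plan is to deduce this directly from Proposition \ref{GCprop} applied twice, once with constant $C$ and once with constant $-C$. The hypothesis $|C|^{p-2}C \ge |\Delta_p g(x)|$ (with $C\ge 0$, so $|C|^{p-2}C = C^{p-1}$) packages together the two one-sided bounds needed to invoke each half of the proposition.

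First I would unpack the hypothesis. Since $C\ge 0$ and $C^{p-1} \ge |\Delta_p g(x)|$ for all $x\in\Omega$, we obtain the two inequalities
\begin{equation*}
C^{p-1} \ge \Delta_p g(x) \quad\text{and}\quad -C^{p-1}\le \Delta_p g(x), \qquad x\in\Omega.
\end{equation*}
The first of these is exactly the hypothesis \eqref{Gcond} of Proposition \ref{GCprop} with the nonnegative constant $C$, so that proposition yields $v_t\le C$.

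For the lower bound, I would apply the second half of Proposition \ref{GCprop} with the nonpositive constant $\widetilde C := -C$. The key observation is
\begin{equation*}
|\widetilde C|^{p-2}\widetilde C = |{-C}|^{p-2}(-C) = -C^{p-1},
\end{equation*}
so the inequality $-C^{p-1}\le \Delta_p g(x)$ is precisely the hypothesis of the second half of the proposition applied to $\widetilde C$. The conclusion is $v_t\ge \widetilde C = -C$.

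Combining $-C\le v_t\le C$ gives $|v_t|\le C$, as desired. There is no real obstacle here; the only point requiring mild care is bookkeeping the sign convention in $|\cdot|^{p-2}(\cdot)$ so that the nonpositive branch of Proposition \ref{GCprop} applies to $-C$ with the right sign on $\Delta_p g$. Since the statement is interpreted in the almost-everywhere / viscosity sense inherited from Proposition \ref{GCprop} (via the two-sided comparison against the supersolution $g(x)+Ct$ and the subsolution $g(x)-Ct$), no further regularity argument is needed.
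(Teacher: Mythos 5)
Your proof is correct and is exactly the intended argument: the paper states this corollary without proof precisely because it follows immediately from applying both halves of Proposition \ref{GCprop}, once to $C\ge 0$ and once to $-C\le 0$, which is what you do, with the sign bookkeeping handled correctly.
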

\begin{cor}
Assume $v$ is a viscosity solution of \eqref{pParabolic} and $g$ satisfies \eqref{Gcond2} for some $C\ge 0$. Then for almost every $t \ge 0$, 
$x\mapsto v(x,t)\in C^{1,\alpha}_\text{loc}(\Omega)$. 
\end{cor}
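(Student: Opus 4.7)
The plan is to reduce the corollary to the elliptic $C^{1,\alpha}$ regularity result of DiBenedetto (Theorem 2 in \cite{DB}, already invoked in Lemma \ref{SchemeVISC}). The key observation is that the previous corollary provides a pointwise bound on $v_t$, which turns the $p$-parabolic equation into an elliptic $p$-Laplace equation with bounded right-hand side at each time slice.

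First I would apply the previous corollary to obtain a constant $C\ge 0$ with $|v_t|\le C$ in the relevant (viscosity / almost everywhere) sense. Since $v$ is Lipschitz in $t$ with constant $C$ (uniformly in $x$), the distributional time derivative $v_t$ exists in $L^\infty(\Omega\times[0,\infty))$. Combined with the assumptions of Theorem \ref{ImplicitSchemeThm} (which are in force throughout this section), $v$ is also a weak solution of \eqref{pParabolic}, so the weak formulation \eqref{WeakForm} holds for almost every $t>0$. For such a $t$, the function $u(x):=v(x,t)$ lies in $W^{1,p}_0(\Omega)$ and satisfies
\begin{equation}
-\Delta_p u = -|v_t(x,t)|^{p-2}v_t(x,t)=:f_t(x)\nonumber
\end{equation}
in the weak sense in $\Omega$, with $\|f_t\|_{L^\infty(\Omega)}\le C^{p-1}$.

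Next I would invoke the interior $C^{1,\alpha}$ regularity result of DiBenedetto \cite{DB} for weak solutions of $-\Delta_p u=f$ with $f\in L^\infty(\Omega)$: there exists $\alpha=\alpha(p,n)\in(0,1]$ (independent of $t$, since the $L^\infty$ bound on $f_t$ is uniform in $t$) such that $u\in C^{1,\alpha}_{\text{loc}}(\Omega)$. This immediately yields $v(\cdot,t)\in C^{1,\alpha}_{\text{loc}}(\Omega)$ for almost every $t\ge 0$, as claimed.

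The only nontrivial point is the passage from the viscosity/joint-spacetime formulation to the slice-wise weak formulation of the elliptic equation. This is the step where a purely viscosity argument would be delicate, because freezing $t=t_0$ in a test function $\phi(x,t)$ loses information unless one already knows $v_t(\cdot,t_0)$ exists in a classical sense. I would sidestep this obstacle by exploiting that under the running hypotheses the viscosity solution is simultaneously a weak solution (Theorem \ref{ImplicitSchemeThm}), and by using the pointwise bound $|v_t|\le C$ from the preceding corollary to extract the required $L^\infty$ data $f_t$ at almost every time. The remaining work is the direct citation of DiBenedetto's theorem, so no further calculation is needed.
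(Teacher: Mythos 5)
Your proposal is correct and is essentially the paper's own proof: the paper likewise identifies $\Delta_p v = |v_t|^{p-2}v_t \in L^\infty(\Omega)$ for a.e.\ $t>0$ via the preceding corollary's bound $|v_t|\le C$, and then cites Theorem 2 of DiBenedetto \cite{DB}. Your additional discussion of why the slicewise elliptic weak formulation is legitimate (via the section's running hypotheses and the coincidence of the viscosity solution with the weak solution from Theorem \ref{ImplicitSchemeThm}) fills in a step the paper leaves implicit, but it is the same argument.
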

\begin{proof}
As $\Delta_p  v=|v_t|^{p-2}v_t \in L^\infty(\Omega)$, for almost every $t>0$, the claim follows from Theorem 2 in \cite{DB}.  
\end{proof}

\section{Large $p$ limit}\label{LargePlim}
We are now prepared to deduce the large $p$ limit of equation \eqref{MainPDE} and prove Theorem \ref{pinfProp}. We interpret this assertion as a parabolic analog of a 
theorem of P. Juutinen, P. Lindqvist and J. Manfredi \cite{JLM}. We also encourage the reader to compare this Theorem \ref{pinfProp} with the results of \cite{JL}.  

\begin{proof}[Proof of Theorem \ref{pinfProp}]
By \eqref{EnergyIdentity} and the assumption that $g\in W^{1,\infty}_0(\Omega)$,  $(v^p_t)_{p>r}$ and $(Dv^p)_{p>r}$ are bounded in $L^{r}_{\text{loc}}(\Omega\times(0,\infty))$ for each $r\ge 1$.  Morrey's inequality then implies $(v^p)_{p>n+1}\subset C^{1-(n+1)/p}_{\text{loc}}(\Omega\times(0,\infty))$ has a subsequence $(v^{p_k})_{k\in \N}$ that converges locally uniformly to a continuous function $v$ on $\Omega\times(0,\infty)$. Now suppose $\phi\in C^\infty(\Omega\times (0,\infty))$ and $v-\phi$ has a strict local maximum at some $(x_0,t_0)\in 
\Omega\times (0,\infty)$. We aim to show \newline $G_\infty(\phi_t(x_0,t_0),D\phi(x_0,t_0), D^2\phi(x_0,t_0))\le 0$; that is,
\begin{equation}\label{WantInfPara}
0\ge 
\begin{cases}
\min\{-\Delta_\infty\phi(x_0,t_0), |D\phi(x_0,t_0)| + \phi_t(x_0,t_0)\}, \quad &\phi_t(x_0,t_0)<0\\
-\Delta_\infty\phi, \quad & \phi_t(x_0,t_0)=0\\
\max\{-\Delta_\infty\phi(x_0,t_0),- |D\phi(x_0,t_0)| + \phi_t(x_0,t_0)\}, \quad& \phi_t(x_0,t_0)>0\\
\end{cases}.
\end{equation}
By the uniform convergence of $v^{p_k}$ to $v$, there is a sequence of points $(x_k,t_k)\rightarrow (x_0,t_0)$ such 
that $v^{p_k}-\phi$ has a local maximum at $(x_k,t_k)$. As $v^{p_k}$ is a viscosity solution of \eqref{MainPDE}, 
\begin{equation}\label{phikCond}
|\phi_t(x_k,t_k)|^{p_k-2}\phi_t(x_k,t_k)\le \Delta_{p_k}\phi(x_k,t_k), \quad k\in \N. 
\end{equation}
\par If $\phi_t(x_0,t_0)>0$, then $\phi_t(x_k,t_k)>0$ for all $k$ large enough. Moreover, \eqref{phikCond} implies $-\Delta_{p_k}\phi(x_k,t_k)<0$ and $|D\phi(x_k, t_k)|\neq 0$ for all $k$ large. Rearranging \eqref{phikCond} gives 
\begin{equation}\label{PtendInf}
\frac{1}{p_k-2}\left(\frac{|\phi_t(x_k,t_k)|}{|D\phi(x_k,t_k)|}\right)^{p_k-4}\phi_t(x_k,t_k)^3\le  \frac{|D\phi(x_k, t_k)|^2\Delta\phi(x_k,t_k)}{p_k-2} +\Delta_\infty\phi(x_k,t_k).
\end{equation}
It follows that $-\Delta_{\infty}\phi(x_0,t_0)\le 0$ in the limit as $k\rightarrow \infty$. And as the right hand side of \eqref{PtendInf} is bounded, it must be that $\phi_t(x_k, t_k)\leq |D\phi(x_k, t_k)|$ for all $k$ large enough. Hence, $-|D\phi(x_0, t_0)|+\phi_t(x_0, t_0)\le 0$; in particular, \eqref{WantInfPara} holds.
\par Now assume $\phi_t(x_0,t_0)=0$. If in addition $|D\phi(x_0,t_0)|=0$, then clearly $-\Delta_\infty\phi(x_0,t_0)\le 0$. If $|D\phi(x_0,t_0)|\neq 0$, then $|D\phi(x_k,t_k)|\neq 0$ for all
$k$ large and \eqref{PtendInf} implies $-\Delta_\infty\phi(x_0,t_0)\le 0$ in the limit as $k\rightarrow \infty$. In either case, \eqref{WantInfPara} holds.

\par Finally, suppose $\phi_t(x_0,t_0)<0$. If additionally, $|D\phi(x_0,t_0)|+\phi_t(x_0,t_0)\le 0$, then clearly \eqref{WantInfPara} follows. Otherwise, 
$|D\phi(x_0,t_0)|+\phi_t(x_0,t_0)>0$ and in particular, $|D\phi(x_k,t_k)|+\phi_t(x_k,t_k)>0$ for all $k$ large. Passing to the limit in \eqref{PtendInf} gives $-\Delta_\infty\phi(x_0,t_0)\le 0$. 
In either case, again we have \eqref{WantInfPara}. 

\par It is now routine to verify that \eqref{WantInfPara} holds if $v-\phi$ only has a local maximum at $(x_0, t_0)$. Moreover, our proof that $v$ is a subsolution immediately extends to 
a proof that $v$ is a supersolution since $G_\infty$ is an odd function:
\begin{equation}\label{Godd}
G_\infty(-a,-\xi, -X)=-G_\infty(a,\xi, X). \nonumber
\end{equation}
for each $a\in \R$, $p\in \R^n$ and symmetric $n\times n$ matrix $X$.
\end{proof}
In \cite{JLM}, it was shown that $\lambda_\infty:=\lim_{p\rightarrow \infty}\lambda_p^{1/p}$ exists.  We conjecture that for any viscosity solution $v$ of \eqref{infParabolic}, the limit $\psi(x):=\lim_{t\rightarrow \infty}e^{\lambda_\infty t}v(x,t)$ exists uniformly in $x\in \Omega$ and is an 
infinity ground state. That is, $\psi$ is a viscosity solution of the PDE 
$$
\begin{cases}
G_\infty(-\lambda_\infty \psi, D\psi, D^2\psi)=0,\quad & x\in \Omega\\
\hspace{1.44in} \psi=0, \quad & x\in\partial\Omega
\end{cases}.
$$
In particular, if $\psi>0$ 
$$
\begin{cases}
\min\{-\Delta_\infty \psi, |D\psi| -\lambda_\infty \psi\}=0,\quad & x\in \Omega\\
\hspace{1.8in} \psi=0, \quad & x\in\partial\Omega
\end{cases}.
$$
If our intuition is correct, then it is appropriate to interpret the flow \eqref{infParabolic} as a natural parabolic equation associated with the infinity Laplacian.
\appendix


\end{document}